\theoremstyle{plain}
\newtheorem{thm}{Theorem}[section]
\newtheorem{lem}[thm]{Lemma}
\newtheorem{prop}[thm]{Proposition}
\theoremstyle{remark}
\numberwithin{equation}{section}
\newcommand{\R}{\mathbb{R}}
\newcommand{\C}{\mathbb{C}}
\newcommand{\N}{\mathbb{N}}
\newcommand{\Z}{\mathbb{Z}}
\newcommand{\ga}{\gamma}
\newcommand{\ep}{\varepsilon}
\DeclareMathOperator*{\dist}{\mathrm{dist}}
\begin{document}    
	
	\title[From counting to decoupling]{Reversing a philosophy: \\from counting   to square functions and decoupling}
	\author[P. T. Gressman \ \ S. Guo \  \ L. B. Pierce \ \ J. Roos \ \ P.-L. Yung] {Philip T. Gressman \ \ Shaoming Guo \ \ Lillian B. Pierce\\ \ \ Joris Roos \ \ Po-Lam Yung}
	\date{\today}
	
	\address{Philip T. Gressman: Department of Mathematics, University of Pennsylvania, Philadelphia, PA-19104, USA}
	\email{gressman@math.upenn.edu}
	
	\address{Shaoming Guo: Department of Mathematics, University of Wisconsin-Madison, Madison, WI-53706, USA}
	\email{shaomingguo@math.wisc.edu}
	
	\address{Lillian B. Pierce: Department of Mathematics, Duke University, Durham, NC-27708, USA}
	\email{pierce@math.duke.edu}
	
	\address{Joris Roos: Department of Mathematics, University of Wisconsin-Madison, Madison, WI-53706, USA}
	\email{jroos@math.wisc.edu}
	
	\address{Po-Lam Yung: Department of Mathematics, The Chinese University of Hong Kong, Ma Liu Shui, Shatin, Hong Kong}
	\email{plyung@math.cuhk.edu.hk}
	
	\subjclass[2010]{42B20, 42B25, 11D45}
	
	\begin{abstract}
		Breakthrough work of Bourgain, Demeter, and Guth recently established that decoupling inequalities can prove powerful results on counting integral solutions to systems of Diophantine equations. In this note we demonstrate that in appropriate situations this implication can also be reversed. As a first   example, we observe that a count for the number of integral solutions to a  system  of Diophantine equations implies a discrete decoupling inequality. 
		Second, in our main result we prove an $L^{2n}$ square function estimate (which implies a corresponding decoupling estimate) for the extension operator associated to a non-degenerate curve in $\R^n$. The proof is via a combinatorial argument that builds on the idea   that  if $\gamma$ is a non-degenerate curve in $\R^n$, then as long as $x_1,\ldots, x_{2n}$ are chosen from a sufficiently well-separated set, then
		$ \gamma(x_1)+\cdots+\gamma(x_n) = \gamma(x_{n+1}) + \cdots + \gamma(x_{2n}) $
		essentially only admits solutions in which $x_1,\ldots,x_n$ is a permutation of $x_{n+1},\ldots, x_{2n}$.
	\end{abstract}

	\dedicatory{Dedicated to Elias M. Stein,\\ in deep appreciation of his generous teaching and clear-sighted vision in harmonic analysis.}
	
	\maketitle
	
	\section{Introduction}
	
	In celebrated work, Bourgain, Demeter, and Guth \cite{BDG16} established  a sharp decoupling inequality for the moment curve, and thereby deduced a full proof of the Vinogradov Mean Value Theorem, providing a count for the number of integral solutions $ 1 \leq x_1,\ldots, x_{2s} \leq X$ to the Vinogradov system 
	\begin{align} 
	x_1 + \cdots + x_s &= x_{s+1} + \cdots + x_{2s} \nonumber \\
	x_1^2 + \cdots + x_s^2 &= x_{s+1}^2 + \cdots + x_{2s}^2 \nonumber \\
	&\vdots \nonumber \\
	x_1^n + \cdots + x_s^n &= x_{s+1}^n + \cdots + x_{2s}^n. \label{eqn:vinogradovsystem}
	\end{align}
	In this note, we show that in appropriate regimes, this implication can be reversed, with a count for the number of integral solutions to (\ref{eqn:vinogradovsystem}) implying a corresponding decoupling inequality. 
	
First, we prove a simple example of this philosophy: we deduce a discrete decoupling estimate from an assumed count for solutions to a system of Diophantine equations such as (\ref{eqn:vinogradovsystem}); this follows from a restricted weak-type estimate and comparisons of discrete norms.

Our main result is in a more general setting: in place of the moment curve $(t,t^2, \ldots, t^n)$, which leads to the system (\ref{eqn:vinogradovsystem}), we   consider any non-degenerate 
	$C^n$ curve $\gamma:[0,1]\to \R^n$ with $n\ge 2$. We prove that an extension operator associated to $\gamma$ satisfies a square function estimate (or reverse Littlewood-Paley inequality) for $L^{2n}$, which immediately implies an $\ell^2$ decoupling inequality in $L^{2n}$. 
	Our proof is combinatorial in nature, and capitalizes on an observation that since $\gamma$ is non-degenerate, as long as $x_1,\ldots, x_{2n}$ are chosen from a sufficiently well-separated set, then 
	$ \gamma(x_1)+\cdots+\gamma(x_n) = \gamma(x_{n+1}) + \cdots + \gamma(x_{2n}) $
	essentially only admits solutions in which $x_1,\ldots,x_n$ is a permutation of $x_{n+1},\ldots, x_{2n}$.
We now state these results precisely.

\subsection{Counting implies discrete decoupling}
	
	Given a map $\phi:\N\to \Z^n$ and an integer $s\ge 1$ let us consider the system of $n$ equations given by
	\begin{equation}\label{eqn:generalsystem}
	\phi(x_1) + \cdots + \phi(x_s) = \phi(x_{s+1}) + \cdots + \phi(x_{2s}).
	\end{equation}
	For every finite set $\mathcal{S}$ of positive integers let $J_{s,\phi}(\mathcal{S})$ denote the number of solutions $(x_1,\dots,x_{2s})\in \mathcal{S}^{2s}$ of the system \eqref{eqn:generalsystem}. Fix $N$ and consider an arbitrary subset $\mathcal{S} \subseteq \{1,\ldots, N\}$.  We see an immediate lower bound $J_{s,\phi}(\mathcal{S}) \ge s! |\mathcal{S}|^s + O(|\mathcal{S}|^{s-1}),$ since solutions for which $x_1,\ldots, x_s$ is a permutation of $x_{s+1},\ldots, x_{2s}$ always exist trivially (the diagonal solutions). A trivial upper bound is $J_{s,\phi}(\mathcal{S}) \leq |\mathcal{S}|^{2s}$.
	One route towards obtaining better upper bounds for the quantity $J_{s,\phi}(\mathcal{S})$ is via a discrete $\ell^p$ decoupling inequality for $L^{2s}$, which is a statement of the following form: given $s \geq 1$, $p \geq 1$, there exists a constant $C_{s,p,\phi,N}$ such that for all sequences $a= (a_j)_j \in \C^N$, 
	\begin{equation}\label{eqn:discdec}
	\big\| \sum_{j=1}^N a_j e(\phi(j)\cdot \alpha) \big\|_{L^{2s}([0,1]^n)} \le C_{s,p,\phi,N} \big(\sum_{j=1}^N |a_j|^p\big)^{1/p}.
	\end{equation}
(To see precisely that this takes the standard form of a decoupling inequality, notice that on the right-hand side, $|a_j| = \|a_j e(\phi(j)\cdot \alpha) \big\|_{L^{2s}([0,1]^n)}.$)
	For any subset $\mathcal{S}$, upon setting $a = (a_j)_j=\mathbf{1}_{\mathcal{S}}$, the inequality \eqref{eqn:discdec} implies the bound 
	\[ J_{s,\phi}(\mathcal{S}) \le C_{s,p,\phi,N}^{2s} | \mathcal{S}|^{2s/p}. \]
	As our first point, we make the simple observation that a converse also holds.
	
	\begin{thm}\label{thm_equiv}
	Given a map $\phi: \N \to \Z^n$ and an integer $s \geq 1$, suppose that there exists a constant $\theta = \theta(\phi,s) \in [s,2s)$ and a constant $c= c(\phi,s) \in (0,\infty)$ such that for all  $N\ge 1$ and for all subsets $\mathcal{S}\subset \{1,\dots,N\}$ we have the inequality 
		\begin{equation}\label{eqn:countingasm}
		J_{s,\phi}(\mathcal{S})\le c |\mathcal{S}|^{\theta}.
		\end{equation}
		Then the $\ell^p$ decoupling inequality for $L^{2s}$ holds for $p=\frac{2s}{\theta}\in (1,2]$: namely,  there exists a constant $c'$ such that for every $(a_j)_j\in \C^N$, we have 
		\begin{equation}\label{eqn:discdecclaim}
		\big\| \sum_{j=1}^N a_j e(\phi(j)\cdot \alpha) \big\|_{L^{2s}([0,1]^n)} \le c' (1+p^{-1}(\log\,N)^{\frac1{p'}})\big(\sum_{j=1}^N |a_j|^p\big)^{1/p}.
		\end{equation}
Here we have $1/p + 1/p'=1$, and we may take $c'=2^{1/p}4^{1/p'}c^{1/2s}$.  
	\end{thm}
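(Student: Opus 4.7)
The plan is to deduce \eqref{eqn:discdecclaim} from \eqref{eqn:countingasm} in two steps: first convert the counting hypothesis into a \emph{restricted strong-type} bound on the exponential sum operator $Ta=\sum_{j=1}^{N}a_je(\phi(j)\cdot\alpha)$, valid for sequences supported in an arbitrary subset and uniformly bounded there, and then promote this restricted bound to a strong $\ell^p\to L^{2s}$ estimate on general sequences via a dyadic decomposition of $(a_j)$ by magnitude, at the price of a factor of order $(\log N)^{1/p'}$.

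For the first step, since $\phi(j)\in\Z^n$, the $2s$-th power of the $L^{2s}([0,1]^n)$ norm of $Tb$ can be expanded using orthogonality of characters:
\[
\Big\|\sum_{j}b_j e(\phi(j)\cdot\alpha)\Big\|_{L^{2s}}^{2s} = \sum_{(x_1,\dots,x_{2s})\text{ solving }\eqref{eqn:generalsystem}} b_{x_1}\cdots b_{x_s}\overline{b_{x_{s+1}}\cdots b_{x_{2s}}}.
\]
For any $(b_j)$ supported in $\mathcal{S}\subseteq\{1,\dots,N\}$ with $\sup_j|b_j|\le \lambda$, the right-hand side has absolute value at most $\lambda^{2s}J_{s,\phi}(\mathcal{S})\le c\lambda^{2s}|\mathcal{S}|^{\theta}$ by \eqref{eqn:countingasm}. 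Taking $2s$-th roots, together with $p=2s/\theta$, yields the restricted bound $\|Tb\|_{L^{2s}}\le c^{1/2s}\lambda|\mathcal{S}|^{1/p}$.

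For the second step, set $M=\|a\|_{\ell^\infty}$ and decompose the index set into the dyadic level sets $\mathcal{S}_k=\{j:M 2^{-k-1}<|a_j|\le M 2^{-k}\}$ for $k\ge 0$. Applying the restricted bound on each $\mathcal{S}_k$ with $\lambda=M 2^{-k}$ and summing via the triangle inequality in $L^{2s}$ gives $\|Ta\|_{L^{2s}}\le c^{1/2s}M\sum_{k\ge 0}2^{-k}|\mathcal{S}_k|^{1/p}$. Since $\|a\|_{\ell^p}^p\ge(M/2)^p\sum_k 2^{-kp}|\mathcal{S}_k|$, I would split the sum at a threshold $K$ chosen so that $2^K\sim MN^{1/p}/\|a\|_{\ell^p}$: for $k\le K$, Hölder's inequality with exponents $(p,p')$ yields
\[
\sum_{k\le K}2^{-k}|\mathcal{S}_k|^{1/p} = \sum_{k\le K}(2^{-kp}|\mathcal{S}_k|)^{1/p} \le (K+1)^{1/p'}\Bigl(\sum_{k\le K}2^{-kp}|\mathcal{S}_k|\Bigr)^{1/p}\lesssim (K+1)^{1/p'}M^{-1}\|a\|_{\ell^p},
\]
while for $k>K$ the crude bound $|\mathcal{S}_k|\le N$ together with the geometric sum give a tail of size $\lesssim N^{1/p}2^{-K}\lesssim M^{-1}\|a\|_{\ell^p}$. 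Because $M\le\|a\|_{\ell^p}$ forces $K\lesssim p^{-1}\log N$, the two contributions combine to yield the factor $1+p^{-1}(\log N)^{1/p'}$ appearing in \eqref{eqn:discdecclaim}. There is no conceptual obstacle here — this is the standard ``restricted-type-implies-strong-type-with-log-loss'' principle — and the only genuine work is the bookkeeping of the $p$-dependent constants through the Hölder step and the choice of cutoff, needed to recover the explicit prefactor $2^{1/p}4^{1/p'}c^{1/2s}$ stated in the theorem.
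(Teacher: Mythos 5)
Your proof is correct, but it follows a genuinely different route from the paper's. The paper only extracts from \eqref{eqn:countingasm} the restricted bound on indicator sequences $a=\mathbf{1}_{\mathcal{S}}$, and then invokes an abstract principle (Lemma \ref{lem:weaknorm}): sublinearity plus the restricted hypothesis give $T(a)\le C\|a\|_{\ell^{p,1}}$ for nonnegative $a$ via the layer-cake representation, a general complex $a$ is split into its four positive/negative real/imaginary parts, and the Lorentz norm is compared to the $\ell^p$ norm by Lemma \ref{lemma_norm}; the stated constant $2^{1/p}4^{1/p'}c^{1/2s}$ is exactly the price of that four-way splitting and H\"older recombination. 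You instead exploit the specific structure of the operator: the orthogonality expansion of $\|Tb\|_{L^{2s}}^{2s}$ over solutions of \eqref{eqn:generalsystem}, with termwise absolute values, gives the \emph{stronger} restricted statement $\|Tb\|_{L^{2s}}\le c^{1/2s}\lambda|\mathcal{S}|^{1/p}$ for arbitrary complex $b$ supported on $\mathcal{S}$ with $|b_j|\le\lambda$, which makes the positive/negative/real/imaginary splitting unnecessary; you then run a dyadic level-set decomposition with H\"older in $k$ and a tail estimate, which is in effect a discrete version of the paper's $\ell^{p,1}$-versus-$\ell^p$ comparison. Each step of yours checks out: the expansion, the termwise bound by $\lambda^{2s}J_{s,\phi}(\mathcal{S})$, the bound $\|a\|_{\ell^p}^p\ge (M/2)^p\sum_k2^{-kp}|\mathcal{S}_k|$, the cutoff $2^K\sim MN^{1/p}/\|a\|_{\ell^p}$ with $M\le\|a\|_{\ell^p}$ forcing $K\lesssim p^{-1}\log N$. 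The one caveat is cosmetic: your argument produces a factor of the form $1+(p^{-1}\log N)^{1/p'}$, which for $p\in(1,2]$ agrees with the paper's $1+p^{-1}(\log N)^{1/p'}$ only up to an absolute constant, and the dyadic decomposition introduces its own factor of $2$; so you prove \eqref{eqn:discdecclaim} with some explicit constant of comparable quality, but not literally the stated $c'=2^{1/p}4^{1/p'}c^{1/2s}$ without redoing the bookkeeping (and there is no reason your constant should match, since the paper's arises from its splitting argument). The trade-off is that your argument is more elementary and tailored to exponential sums, while the paper's Lemma \ref{lem:weaknorm} is a general statement about any sublinear $T$ satisfying a restricted bound on indicators, applicable beyond this setting.
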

If it is known for a certain function $\phi$ that in the above setting we may take $\theta=s$ (that is, all solutions are diagonal solutions), this statement is a discrete analogue of our main result, which we now describe.

\subsection{Counting implies a square function estimate}

We now define the notation required to state our main result.
	Recall that a $C^n$ curve $\gamma:[0,1]\to \R^n$   is said to be non-degenerate if 
	\begin{equation}\label{eqn:nondeg}
	\det(\gamma'(t),\gamma''(t),\dots,\gamma^{(n)}(t))\not=0\;\text{for every}\;t\in [0,1].
	\end{equation}
	A typical example is the moment curve
	\[ \gamma(t) = (t,t^2,\dots,t^n). \]
	Given any such curve, we may define the associated Fourier extension operator
	\[ E_I f(x) = \int_I e^{2\pi i x\cdot \gamma(t)} f(t) dt \qquad(x\in\R^n),\]
	where $I\subset [0,1]$ is an interval.
	Given a ball $B\subset\R^n$ of radius $R$ centered at a point $x_0\in\R^n$ we define a weight localized near $B$ by
	\[ w_B(x) = (1+R^{-1}|x-x_0|)^{-E}, \]
	where $E>n$ is fixed once and for all ($E=n+1$ suffices). Given any non-negative function $v$ we define the weighted $L^p$ norm
	\[ \|f\|_{L^p(v)} = \big(\int_{\R^n} |f(x)|^p v(x) dx\big)^{1/p}. \]
	
	Our main result is the following square function estimate.
	\begin{thm}\label{thm:main}
		Suppose that $\gamma:[0,1] \rightarrow \mathbb{R}^n$ is a non-degenerate $C^n$ curve. Then there exists a constant $C = C(\gamma, n)\in (0,\infty)$ such that the following holds: for each  integer $1 \leq m \leq n$, for every $R\ge 1$ and every ball $B$ of radius at least $R^n$, we have that for all $f \in L^{2m}(w_B)$, 
		\begin{equation}\label{eqn:main}
		\|E_{[0,1]} f\|_{L^{2m}(w_B)} \le C\big\|\big(\sum_{|I|=R^{-1}} |E_I f|^2\big)^{1/2}\big\|_{L^{2m}(w_B)},
		\end{equation}
	where the summation is over intervals $I$ belonging to a dissection of $[0,1]$ into intervals of length $R^{-1}$. 
	\end{thm}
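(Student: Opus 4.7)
The approach is to raise both sides of \eqref{eqn:main} to the $2m$-th power, expand the left-hand side as a sum over $2m$-tuples of length-$R^{-1}$ subintervals of $[0,1]$, and show that the dominant contribution comes from \emph{permutation tuples}---tuples $(I_1,\dots,I_{2m})$ for which $\{I_1,\dots,I_m\}=\{I_{m+1},\dots,I_{2m}\}$ as multisets---which together reconstruct the square function on the right-hand side. Writing $E_{[0,1]}f=\sum_I E_I f$, expanding the $2m$-th power, and integrating in $x$ against $w_B$ yields
\[
\|E_{[0,1]}f\|_{L^{2m}(w_B)}^{2m}=\sum_{I_1,\dots,I_{2m}}\int_{I_1\times\cdots\times I_{2m}}\widehat{w_B}\bigl(\Lambda(t_1,\dots,t_{2m})\bigr)\prod_{j=1}^m f(t_j)\prod_{j=m+1}^{2m}\overline{f(t_j)}\,dt_1\cdots dt_{2m},
\]
where $\Lambda(t_1,\dots,t_{2m})=\sum_{j=1}^m\gamma(t_j)-\sum_{j=m+1}^{2m}\gamma(t_j)\in\R^n$. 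Because the radius of $B$ is at least $R^n$, the transform $\widehat{w_B}$ has peak value $\approx|B|$ and rapid polynomial decay outside a ball of radius $R^{-n}$ about the origin, so only tuples with $|\Lambda|\lesssim R^{-n}$ can contribute meaningfully.

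For the permutation tuples I would exploit the identity $|\prod_{j=1}^m E_{I_j}f\cdot\prod_{j=1}^m\overline{E_{I_{\sigma(j)}}f}|=\prod_{j=1}^m|E_{I_j}f|^2$, valid for any $\sigma\in S_m$: the triangle inequality then bounds their total contribution by a constant multiple of $\int w_B(\sum_I|E_If|^2)^m$, which is precisely the $2m$-th power of the right-hand side of \eqref{eqn:main}. The remaining task is to prove a quantitative form of the key observation highlighted in the abstract: if $(I_1,\dots,I_{2m})$ is not a permutation tuple and its intervals are pairwise either equal or $R^{-1}$-separated, then $|\Lambda(t)|\gg R^{-n}$ for every choice of $t_j\in I_j$, with an excess factor measuring how far the tuple is from a permutation. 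Combining this lemma with the polynomial decay of $\widehat{w_B}$ and a dyadic decomposition in the size of $|\Lambda|$ shows that the non-permutation contribution is absorbed into the permutation one, yielding \eqref{eqn:main}.

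The main obstacle is this quantitative lemma, which is where the non-degeneracy hypothesis \eqref{eqn:nondeg} enters in an essential way. The plan is to Taylor-expand $\gamma$ around a well-chosen base point in each cluster of nearby $t_j$'s so that $\Lambda$ is realized as a linear combination of the vectors $\gamma'(c),\gamma''(c),\dots,\gamma^{(n)}(c)$ whose coefficients are symmetric power-sum-type differences of the $t_j$'s. The linear independence of these basis vectors, guaranteed by \eqref{eqn:nondeg}, reduces the desired lower bound on $|\Lambda|$ to a quantitative version of the classical fact that $m$ power sums of $m$ reals determine the underlying multiset (Newton's identities in the moment-curve model case). The cases $m<n$ are over-determined and comparatively mild; the critical case $m=n$ is the most delicate, since the saturating bound $|\Lambda|\gtrsim R^{-n}$ must be extracted at exactly the scale where $\widehat{w_B}$ ceases to decay, so all constants must be tracked carefully and one must rule out near-cancellations coming from not-quite-permutation configurations. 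Once this quantitative lemma is in hand, the remainder of the argument is routine bookkeeping.
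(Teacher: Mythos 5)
Your overall strategy---expand the $2m$-th power, let permutation tuples reconstruct the square function, and eliminate non-permutation tuples using a lower bound $|\Lambda|\gtrsim R^{-n}$ coming from non-degeneracy---is the same as the paper's, but there are two genuine gaps in the execution. The first is your treatment of the weight. Working directly with $\widehat{w_B}$ and hoping to absorb the non-permutation terms via ``polynomial decay plus dyadic decomposition'' fails at the critical scale: one only has $|\widehat{w_B}(\xi)|\lesssim |B|\,(1+R_B|\xi|)^{-M}$ with $R_B\ge R^n$ and with $M$ limited by the fixed exponent $E=n+1$ in the definition of $w_B$ (so ``rapid'' decay is not available), and when the radius equals $R^n$ and $|\Lambda|\approx R^{-n}$ this damping factor is comparable to $1$. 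Thus barely-off-diagonal tuples are weighted essentially as heavily as the diagonal ones, they vastly outnumber the permutation tuples, and their integrands are oscillatory rather than pointwise dominated by the square function; there is no smallness to power the claimed absorption, which is why you find yourself needing an ``excess factor'' whose existence and summability you do not establish---this is not routine bookkeeping. The paper avoids the issue entirely: by the standard reduction (\cite[Lemma 4.1]{BD17}) it suffices to bound $\|\sum_I E_If\|_{L^{2m}(\mathbf{1}_B)}$ by the $w_B$-weighted square function, and then $\mathbf{1}_B$ is majorized by $\varphi_R(x)=\varphi(R^{-n}x)$ with $\widehat{\varphi}$ compactly supported, so $\widehat{\varphi_R}$ vanishes identically outside $B(0,R^{-n})$ and every non-permutation term is exactly zero, with no tails to sum.

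The second gap is in the key lemma itself. You assert the lower bound for intervals that are ``pairwise either equal or $R^{-1}$-separated'' anywhere in $[0,1]$; for a general non-degenerate $C^n$ curve this is false, since non-degeneracy is local and tangent directions can recur at distant points (e.g.\ a long circular arc in $\R^2$ has antiparallel tangents), so $\det(\gamma'(t_1),\dots,\gamma'(t_n))$ can vanish for well-separated points and $\Lambda$ can be small for non-permutation tuples. This is why the paper's Proposition \ref{prop_ga_diag} requires both a large separation constant $c_0(\gamma,n)R^{-1}$ and a diameter restriction $\delta_0(\gamma,n)$, and why the proof of the theorem first splits the dissection into $O_{\gamma,n}(1)$ sparse, localized subfamilies before expanding (note also that adjacent intervals of the full dissection are at distance $0$, so some sparsification is needed even to meet your own hypothesis). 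Finally, the proof of the lower bound is more delicate than ``Taylor expansion plus a quantitative Newton's identities'': since $m$ may be less than $n$ and the points range over whole intervals, the paper proceeds by writing $\sum_i(\gamma(t_i)-\gamma(s_i))=\int\gamma'\,\Xi$ with $\Xi=\sum_i\chi_{[s_i,t_i)}$, decomposing the support of $\Xi$ into at most $n$ single-signed intervals, and applying two-sided Vandermonde-type bounds (via Cramer's rule) to the matrix of averaged derivatives; your sketch would need an equivalent mechanism to handle near-solutions rather than exact ones.
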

	
	In the case $n=m=2$, the estimate \eqref{eqn:main} is classical and in its essence goes back to inequality (4) in Fefferman \cite{Fef73}. 
	
	For comparison, we recall the shape of an $\ell^2$ decoupling inequality for $L^{p}$, which is the statement that for  
	every ball $B$ of radius at least $R^n$ and every $\varepsilon>0$ there exists a constant $C_\ep$ such that for all $f \in L^p(w_B)$,
	\begin{equation}\label{eqn:decoupling}
	\|E_{[0,1]} f\|_{L^p(w_B)} \le C_\varepsilon R^{\varepsilon} \big(\sum_{|I|=R^{-1}} \|E_I f\|_{L^{p}(w_B)}^2\big)^{1/2}.
	\end{equation}
	Minkowski's inequality shows that for $p \geq 2$, a square function estimate in $L^p$ implies the corresponding $\ell^2$ decoupling for $L^p$ (and is strictly stronger if $p>2$, see \cite[\S 5.3.2]{Pie19} for an explanation), so that (\ref{eqn:main}) immediately implies (\ref{eqn:decoupling}) in the case that $p$ is an even integer with $2 \leq p\le 2n$.
	
	Of course, the deep work of Bourgain--Demeter--Guth \cite{BDG16} proved the result (\ref{eqn:decoupling}) of $\ell^2$ decoupling for $L^p$  in the much larger, sharp, range $2 \leq p \leq n(n+1)$, which then implies the truth of the main conjecture in the context of Vinogradov's mean value theorem. (See also the work of Wooley, which   resolves this major conjecture by other methods \cite{Woo16,Woo17}.)
	
	Yet relative to  this broader context, Theorem \ref{thm:main} has two appealing aspects: first,  in the case $2<p\le 2n$ it is a strengthening of the decoupling inequality, and moreover  our argument is surprisingly simple, critically using the fact that $p$ is an even integer.
	
	We can already see a hint of the special role of the exponent $p=2n$ from the following.
	Fix an integer $s\ge 1$. For every integer $X \ge 1$ we let $J_{s,n}(X)$ denote the number of integral solutions $(x_1,\dots,x_{2s})$ with $1\le x_j\le X$ to the system of equations given by (\ref{eqn:vinogradovsystem}).
	Certainly, any tuples in which $x_1,\ldots, x_s$ is a permutation of $x_{s+1},\ldots, x_{2s}$ provide a solution, and these are referred to as diagonal solutions, of which there are $s! X^s + O(X^{s-1})$ in number. Moreover, if $s \leq n$ (corresponding to looking at $L^{2s}$ spaces with even $2s \leq 2n$), then it has long been known that these are  the \emph{only solutions} to (\ref{eqn:vinogradovsystem}); as this idea is a central motivation for our work, we review a proof of this classical fact in Lemma \ref{lemma_diag}. 
	More generally for non-degenerate $C^n$ curves $\gamma: [0,1] \rightarrow \R^n$, our approach to proving Theorem \ref{thm:main} for even integers $p \leq 2n$ uses a perturbed version of this fact,  that the system of $n$ equations given by
	\begin{equation}\label{iterated}
	\gamma(x_1)+\cdots+\gamma(x_n) = \gamma(x_{n+1}) + \cdots + \gamma(x_{2n}) 
	\end{equation}
	only admits ``essentially diagonal'' solutions.
	
To motivate precisely the result we prove in this context, we first remark on the use 
	 of the localized norms, weighted by the functions $w_B$, employed in Theorem \ref{thm:main}. The extension operator $E_I$ has been studied extensively in the literature, often in the guise of its dual, the restriction operator $f\mapsto \widehat{f}|_{\gamma(I)}$ (see e.g.  \cite{BOS09} for a survey of related literature).  Drury \cite[Thm. 2]{Dru85} proved that for a non-degenerate curve $\ga: I \rightarrow \R^n$ on an interval $I \subset [0,1]$,
	\[ \|E_{I} f\|_{L^p(\R^n)} \le c_p \| f\|_{L^q(I) } \leq c_p \|f\|_{L^\infty(I)} \]
	holds for all $p>\frac{n(n+1)}2+1$ (here $q$ is defined by its conjugate $q'$ satisfying $q'n(n+1)/2 = p$). This result is sharp in the range of $p$, since it is known for example in the case of $\ga$ being the moment curve, that $\|E_I 1\|_{L^p(\R^n)}=\infty$ if $p\le \frac{n(n+1)}2+1$ (recorded in  \cite[Thm. 1.3]{ACK87}, arising from earlier work \cite{ACK79}). 
	This shows in particular that unless we localize using the weight $w_B$, the main inequality (\ref{eqn:main}) would lose its significance, since both sides would be infinite. 
	
In general, a weighted norm such as 
\[ \| E_I f \|_{L^{2m}(\phi)}^{2m} = \int  |E_I f(x)\phi(x)^{1/(2m)}|^{2m} dx\]
	leads us to study, on the Fourier side, the convolution of $(E_If)\hat{\;}$ with $(\phi(x)^{1/(2m)})\hat{\;}$, which has the effect of ``blurring'' the support of of $(E_If)\hat{\;}$, so that we must consider not only exact solutions to (\ref{iterated}) but also near-solutions to (\ref{iterated}).
	(See equation (\ref{blur_demo}) for the precise line in our argument at which this occurs, or see \cite[\S 8.1.3]{Pie19} for another example of this effect.) 
This leads us to prove the following key result, which shows that any near-solution to (\ref{iterated}) must be essentially diagonal.
	\begin{prop}\label{prop_ga_diag}
		Let $\ga : [0,1]\rightarrow \R^n$ be a non-degenerate $C^n$ curve. Then there exist constants $\delta_0 = \delta_0(\ga,n) \leq 1$ and $c_0= c_0(\ga,n) \geq 10$ such that the following holds. Let  $\mathcal{I}$ be any set of intervals   from a dissection of $[0,1]$ into pairwise disjoint intervals of length $R^{-1}$, such that
		\begin{equation}\label{assps}
		 \mathrm{dist}(I,I')\ge c_0 R^{-1} \quad\text{for}\;I\not=I'\in\mathcal{I}, \quad\text{and}\quad \mathrm{diam}\big(\bigcup_{I\in\mathcal{I}} I\big)\le \delta_0. 
		 \end{equation}
		Then for any collection of $2n$ intervals $I_1,\ldots, I_n, I_1', \ldots, I_n'$ from $\mathcal{I}$, if the tuple $(I_1,\dots,I_n)$ is not a permutation of $(I_1',\dots,I_n')$, then for any points $t_i\in I_i$ and $s_i\in I_i'$,
		\begin{equation}\label{eqn:gammaest}
		\big|\sum_{i=1}^n (\gamma(t_i) - \gamma(s_i)) \big| \ge R^{-n}.
		\end{equation}
	\end{prop}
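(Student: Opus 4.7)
The plan is to reduce to the moment curve via local linearization and then prove a quantitative form of Lemma \ref{lemma_diag}. First I fix a reference point $t_*\in\bigcup\mathcal{I}$ and Taylor-expand $\gamma$ to order $n$ at $t_*$. Non-degeneracy provides an invertible linear map $\Lambda$ sending $\{\gamma^{(k)}(t_*)/k!\}_{k=1}^n$ to the standard basis of $\R^n$; after applying $\Lambda$ (which only alters the constants in the conclusion), the Taylor polynomial of $\gamma-\gamma(t_*)$ becomes the moment curve $\mu(u)=(u,u^2,\dots,u^n)$. This gives
\[ \sum_{i=1}^n \bigl(\gamma(t_i)-\gamma(s_i)\bigr) = (p_1,p_2,\dots,p_n) + \mathrm{Rem}, \qquad p_k := \sum_{i=1}^n(t_i-t_*)^k - \sum_{i=1}^n(s_i-t_*)^k, \]
with $|\mathrm{Rem}|$ controlled by the Taylor remainder involving the modulus of continuity $\omega$ of $\gamma^{(n)}$.

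Second, I prove the quantitative moment-curve lower bound: when the multiset of intervals differs, $\max_k|p_k|\geq c_1(c_0 R^{-1})^n$ for some $c_1=c_1(n)>0$. Since distinct intervals of the dissection are disjoint, the hypothesis forces the multisets of points $\{t_i\}$ and $\{s_j\}$ to differ as well. Hence the monic polynomials $T(z)=\prod_i(z-t_i)$ and $S(z)=\prod_i(z-s_i)$ are distinct, and $T-S$ is a nonzero polynomial of degree at most $n-1$. Its coefficients are differences of elementary symmetric polynomials, expressed via Newton's identities in terms of the $p_k$ (and the bounded $e_k(\mathbf{s})$). Evaluating $T-S$ at the $2n$ points $\{t_i,s_j\}$ produces products such as $\prod_i|s_j-t_i|$ whose factors across distinct intervals are at least $(c_0-1)R^{-1}$; a Vandermonde/Lagrange-interpolation argument then forces $\max_k|p_k|$ to be of order $(c_0R^{-1})^n$. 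Once $\delta_0$ is small enough that the remainder $|\mathrm{Rem}|$ is dominated by the main term, the desired inequality $|\sum_i(\gamma(t_i)-\gamma(s_i))|\geq R^{-n}$ follows.

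The main obstacle is a mismatch of scales: the main term has size $(c_0R^{-1})^n$, which vanishes as $R\to\infty$, whereas the Taylor remainder from a fixed reference point $t_*$ is a constant in $R$, so the naive estimate breaks down for large $R$. The remedy is to Taylor-expand $\gamma$ at the varying interval centers $\tau_I$ (for $t\in I$) instead of at a single $t_*$, yielding a per-term remainder of size $O(\omega(R^{-1})R^{-n})$ that correctly tracks $R$. The resulting main term, a sum of Taylor polynomials about varying centers, can then be reorganized by a telescoping argument expressing differences of local Taylor polynomials in terms of a common Taylor polynomial at $t_*$, so as to recover the power-sum structure from the second step, at the cost of additional constant-in-$R$ losses that are dominated once $\delta_0$ is chosen sufficiently small; the quantitative moment-curve bound then yields the conclusion for all $R\geq 1$.
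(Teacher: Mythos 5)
The fatal problem is in your third step, the error analysis for the reduction to the moment curve; your own diagnosis of the scale mismatch is correct, but the proposed remedy does not close it. Expanding each $\gamma(t_i)$ about the center $\tau_{I_i}$ of its own interval does give per-point remainders $O(\omega(R^{-1})R^{-n})$, but the resulting ``main term'' involves the derivatives $\gamma^{(k)}(\tau_{I_i})$ at $2n$ \emph{different} basepoints and has no power-sum structure. The moment you re-express these local Taylor polynomials through a common basepoint $t_*$, you re-commit the error you were avoiding: replacing $\gamma^{(k)}(\tau_I)$ by its degree-$(n-k)$ Taylor polynomial at $t_*$ costs $O(\omega(\delta_0)\delta_0^{\,n-k})$, and after multiplying by $(t_i-\tau_{I_i})^k=O(R^{-k})$ this produces terms of size $\omega(\delta_0)\delta_0^{\,n-k}R^{-k}$ with $k<n$, which for any \emph{fixed} $\delta_0>0$ are $\gg R^{-n}$ as $R\to\infty$. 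Since the quantity you must beat, $R^{-n}$ (indeed the true size of the left side of \eqref{eqn:gammaest} can be as small as $\sim(c_0R^{-1})^n$, e.g.\ $n=2$, $t_{1,2}=a\mp d$, $s_{1,2}=a\mp d'$ with $|d-d'|\sim c_0R^{-1}$), tends to zero in $R$, no choice of $\delta_0$ independent of $R$ can make ``constant-in-$R$'' or $R^{-k}$-sized losses negligible; and there is no pairing of the $t_i$ with the $s_i$ available to force cancellation in these error terms, precisely because the hypothesis is that the tuples are \emph{not} permutations of each other. This is why the paper never compares $\gamma$ additively with a polynomial model: it writes $\sum_i(\gamma(t_i)-\gamma(s_i))=\int_0^1\gamma'\,\Xi$, decomposes the support of $\Xi$ into at most $n$ signed intervals, one of length $\ge c_0R^{-1}$ (Proposition \ref{lemma_decomp}, via Lemma \ref{lem:comb}), and then uses the two-sided Vandermonde-type determinant bounds of Propositions \ref{lem:detcond} and \ref{prop_4_det} together with Cramer's rule; the non-degeneracy enters multiplicatively, through comparability of determinants, which is what survives at scale $R^{-n}$.

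Your second step also has a gap as stated, though a repairable one. The quantitative moment-curve bound is true (in the form $\max_k|p_k|\gtrsim_n R^{-n}$), but evaluating $T-S$ at the $2n$ points cannot prove it: if every $t_i$ coincides with some $s_j$ and vice versa while the multiplicities over intervals differ --- e.g.\ $(t_1,t_2,t_3)=(x,x,y)$, $(s_1,s_2,s_3)=(x,y,y)$ with $x,y$ in distinct intervals --- then $T-S=(y-x)(z-x)(z-y)\ne 0$ yet it vanishes at all $2n$ points, and small perturbations make those values arbitrarily small compared with $\max_k|p_k|$, so no interpolation argument from these nodes yields a lower bound on the coefficients. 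A correct argument must exploit the per-interval difference in multiplicities, for instance a Rouch\'e/argument-principle count of the zeros of $T$ and $S$ in a disc of radius $2R^{-1}$ about an interval where the multiplicities disagree (on whose boundary $|T|\gtrsim R^{-n}$ while $|T-S|$ is controlled by the coefficients), or the sign-counting mechanism that the paper packages combinatorially in Lemma \ref{lem:comb}.
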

This proposition comprises the majority of the technical work of the paper; once it has been proved, the square function estimate of Theorem \ref{thm:main} quickly follows.
	
	It is reasonable to ask whether a square function estimate of the form (\ref{eqn:main}) can be proved for $p> 2n$, leading to consideration of the system (\ref{eqn:vinogradovsystem}) (or its generalization (\ref{iterated}) for non-degenerate $\ga$) in $2s$ variables, for $s>n$.
	Our present method of argument seems to rely on being in a regime in which the only solutions to (\ref{eqn:vinogradovsystem}) (or near-solutions to (\ref{iterated})) are diagonal (or essentially diagonal), and so this leads one to ask whether there are off-diagonal solutions to (\ref{eqn:vinogradovsystem}) when $s>n$, and if so, how many. This has been studied since the 1850's; as we later note,
	 for $1 \leq n \leq 9$ and $n=11$, it is known that at least one off-diagonal solution already exists to (\ref{eqn:vinogradovsystem}) when $s=n+1$ (see \S \ref{sec_elem}).  Moreover, it is known (Lemma \ref{lemma_off_diag}) that as soon as one off-diagonal solution to (\ref{eqn:vinogradovsystem}) exists, it generates many more, thus presenting a significant obstacle to our current method of proof.  
	This seems to suggest that the particular exponent $2n$, despite being far away from  the sharp decoupling exponent $p_n = n(n+1)$, still plays a special role in square function estimates such as Theorem \ref{thm:main}.

	\subsection*{Notation} We will use the notation $A\lesssim B$ to denote that $A\le C\cdot B$ for some constant $C$. The constant $C$ may change from line to line and is allowed to depend on $\gamma$ and $n$. Given two intervals $J_1,J_2$ on the real line, we will say that they are essentially disjoint if they are disjoint except possibly at their endpoints. 
	We define the distance between two intervals by $\dist(J_1,J_2)=\inf_{x\in J_1, y\in J_2} |x-y|$ and the diameter of an interval by $\mathrm{diam}(J)=\sup_{x,y\in J}|x-y|$; we denote the center of an interval $J$ by $c(J)$.

	\subsection*{Acknowledgements.} We thank the American Institute of Mathematics for funding our collaboration in the context of a  SQuaRE workshop series. Gressman has been partially supported by NSF Grant DMS-1764143. Pierce has been partially supported by NSF CAREER grant DMS-1652173, a Sloan Research Fellowship, and as a von Neumann Fellow at the Institute for Advanced Study, by the Charles Simonyi Endowment and NSF Grant No. 1128155. Yung was partially supported by a General Research Fund CUHK14303817 from the Hong Kong Research Grant Council, and a direct grant for research from the Chinese University of Hong Kong (4053341).	
	
\section{Elementary arguments for the moment curve}\label{sec_elem}	
This section presents classical arguments about diagonal and off-diagonal solutions for the Vinogradov system (\ref{eqn:vinogradovsystem}) relating to the moment curve $\ga(t) = (t,t^2,\ldots, t^n)$.
 
	\begin{lem}\label{lemma_diag}
		For $s\leq n$, the only solutions $1 \leq x_1 , \ldots, x_{2s} \leq X$ to (\ref{eqn:vinogradovsystem}) are diagonal, that is, $x_1,\ldots, x_s$ is a permutation of $x_{s+1},\ldots, x_{2s}$. 
	\end{lem}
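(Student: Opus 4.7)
The plan is to reduce the equality of power sums to equality of elementary symmetric polynomials via Newton's identities, and from there to equality of multisets via the fundamental theorem of algebra.

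First, I would set $y_i = x_i$ for $1 \le i \le s$ and $z_i = x_{s+i}$ for $1 \le i \le s$, and let $p_k(y) = y_1^k + \cdots + y_s^k$, $e_k(y)$ be the $k$-th elementary symmetric polynomial in $y_1,\ldots,y_s$, and similarly for $z$. The Vinogradov system, restricted to the first $s$ equations (which is allowed since $s \le n$), says exactly that $p_k(y) = p_k(z)$ for every $1 \le k \le s$.

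Next, I would invoke Newton's identities, which express $k \cdot e_k$ recursively in terms of $e_1,\ldots,e_{k-1}$ and $p_1,\ldots,p_k$:
\[
k\, e_k = \sum_{j=1}^{k} (-1)^{j-1} e_{k-j} p_j, \qquad 1 \le k \le s.
\]
Inducting on $k$, the identity for $y$ and the identity for $z$ both have the same right-hand side once one knows $e_j(y) = e_j(z)$ for all $j < k$; together with $p_k(y) = p_k(z)$, this forces $e_k(y) = e_k(z)$. Hence $e_k(y) = e_k(z)$ for all $1 \le k \le s$.

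Finally, I would form the monic polynomials
\[
P(T) = \prod_{i=1}^s (T - y_i) = \sum_{k=0}^s (-1)^k e_k(y) T^{s-k}, \qquad Q(T) = \prod_{i=1}^s (T - z_i) = \sum_{k=0}^s (-1)^k e_k(z) T^{s-k}.
\]
Since $e_k(y) = e_k(z)$ for all $k$, we have $P = Q$ as polynomials, so they have the same roots with the same multiplicities. Thus the multisets $\{y_1,\ldots,y_s\}$ and $\{z_1,\ldots,z_s\}$ coincide, which is exactly the statement that $(x_1,\ldots,x_s)$ is a permutation of $(x_{s+1},\ldots,x_{2s})$. No step presents a genuine obstacle: the only subtlety is simply noting that having $n$ equations with $s \le n$ gives us access to the first $s$ power-sum identities, which is precisely what Newton's identities need in order to pin down $e_1,\ldots,e_s$.
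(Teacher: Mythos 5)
Your proof is correct and follows essentially the same route as the paper: Newton--Girard identities to pass from equal power sums to equal elementary symmetric polynomials, then comparing the monic polynomials whose roots are the two tuples. The only cosmetic difference is that you work directly with $s$ variables using the first $s$ equations, whereas the paper pads with zeros to reduce to the case $s=n$; both are valid.
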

	\begin{proof}
	The proof relates back to identities known to Newton; we follow the presentation of  \cite[\S 21.9]{HW08}.	First note that it suffices to consider the case $s=n$, since (upon setting the remaining variables to zero) any violation of the result for fewer variables would result in a violation of the result for $s=n$. 
		
		For each $1 \leq j \leq n$ let us denote by $p_j(t_1,\ldots, t_n)$ the polynomial $\sum_{1 \leq i \leq n} t_i^j $. For each $ 1\leq j \leq n$ let us denote by $S_j(t_1,\ldots, t_n)$ the $j$-th elementary symmetric polynomial, so that $S_0(t_1,\ldots, t_n) = 1,$ $S_1(t_1,\ldots, t_n) = \sum_i t_i,$ $S_2(t_1,\ldots, t_n) = \sum_{i<i'} t_it_{i'},$and so on up to $S_n(t_1,\ldots, t_n) = t_1\cdots t_n$. In particular, given any values for $(t_1,\ldots,t_n)$, the symmetric polynomials have the property that the monic one-variable polynomial $F_{t_1,\ldots, t_n}(T)$  with roots $t_1,\ldots, t_n$ is given by 
		\[ S_0(t_1,\ldots, t_n) T^n + \cdots + S_{n-1}(t_1,\ldots, t_n) T + S_n (t_1,\ldots, t_n).\]
		The Newton-Girard identities state that for each $j$, $S_j$ may be determined from the polynomials $S_i$ with $i < j$ and $p_i$ with $i  \leq j$; precisely, we have the statement that for each $1 \leq j \leq n$, 
		\begin{equation}\label{SP} 
		j S_j(t_1,\ldots, t_n) = \sum_{i=1}^j (-1)^{i-1} S_{j-i}(t_1,\ldots, t_n)p_i(t_1,\ldots, t_n).
		\end{equation}
		Now on the one hand, if we assume that $(x_1 , \ldots, x_{2n})$ solves (\ref{eqn:vinogradovsystem}), then we know that for each $1 \leq j \leq n$ we have 
		\[ p_j(x_1,\ldots, x_n) = p_j(x_{n+1}, \ldots x_{2n}).\]
		But by (\ref{SP}), we therefore see that for each $1 \leq j \leq n$, 
		\[ S_j(x_1,\ldots, x_n) = S_j(x_{n+1}, \ldots x_{2n}).\]
		Thus, recalling our earlier notation,   $F_{x_1,\ldots, x_n}(T)$ and $F_{x_{n+1},\ldots, x_{2n}}(T)$ are identical as polynomials in $T$ and hence the roots $(x_1,\ldots, x_n)$ of the first polynomial are a permutation of the roots $(x_{n+1},\ldots, x_{2n})$ of the second polynomial, proving the lemma.
	\end{proof}

Next we see that if there is even one off-diagonal solution to (\ref{eqn:vinogradovsystem}), it can be used to generate many more. Here we recall a proof in \cite[p. 194]{VW97}; we thank Trevor Wooley for pointing out that similar ideas may be found in Mordell \cite{Mor32} and Gloden \cite{Glo44}.

	\begin{lem}\label{lemma_off_diag}
		Suppose that an off-diagonal solution $(x_1,\ldots, x_{2s})$ to (\ref{eqn:vinogradovsystem}) exists, with $1 \leq x_1 , \ldots, x_{2s} \leq X$. Then there are at least $\gtrsim X^2$ off-diagonal solutions in this range.
	\end{lem}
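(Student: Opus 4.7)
The plan is to exploit the affine invariance of the Vinogradov system: I claim that if $(x_1,\ldots,x_{2s})$ solves \eqref{eqn:vinogradovsystem}, then so does $(\lambda x_1 + a, \ldots, \lambda x_{2s} + a)$ for every integer $\lambda \geq 1$ and every integer $a$. To verify this, I would expand $(\lambda x_i + a)^j = \sum_{k=0}^{j} \binom{j}{k} \lambda^k a^{j-k} x_i^k$; after summing over $i$ and subtracting the two halves, the $j$-th new equation becomes a $\binom{j}{k}\lambda^k a^{j-k}$-weighted linear combination of the original differences $\sum_{i=1}^{s} x_i^k - \sum_{i=s+1}^{2s} x_i^k$ for $0 \leq k \leq j$; all of these vanish (the $k=0$ term trivially, the rest by the first $j$ equations of \eqref{eqn:vinogradovsystem}).

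Next I would fix the given off-diagonal solution $(x_1^*,\ldots,x_{2s}^*) \in [1,X]^{2s}$ and set $M = \max_i x_i^*$, $m = \min_i x_i^*$, noting that $M > m$ since any constant tuple is diagonal. The transformed tuple lies in $[1,X]^{2s}$ precisely when $1 - \lambda m \leq a \leq X - \lambda M$, and restricting to integer $\lambda$ in the range $1 \leq \lambda \leq (X-1)/(2(M-m))$ leaves a window of length at least $(X-1)/2$ for the integer $a$, yielding $\gtrsim X^2/(M-m) \gtrsim X^2$ admissible pairs $(\lambda,a)$, where the implicit constant depends on the initial solution via $M-m$. Distinct pairs produce distinct tuples: if $\lambda x_i^* + a = \lambda' x_i^* + a'$ for every $i$, then $(\lambda-\lambda')x_i^*$ is constant in $i$, forcing $\lambda = \lambda'$ and then $a = a'$ by $M>m$. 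Off-diagonality is preserved because $x \mapsto \lambda x + a$ is injective on the integers, so the diagonal property of the transformed tuple would transfer back to $(x_1^*,\ldots,x_{2s}^*)$, contrary to assumption.

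No real obstacle is expected; the argument is bookkeeping around a two-parameter affine action on the solution set. The only mild subtlety is that the implicit constant in $\gtrsim X^2$ depends on the fixed initial off-diagonal solution through $M-m$, which is consistent with the form of the lemma's statement.
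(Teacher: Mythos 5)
Your proposal is correct and follows essentially the same route as the paper: exploit the translation--dilation invariance of the system to act on the fixed off-diagonal solution by $x \mapsto \lambda x + a$, count the $\gtrsim X^2$ admissible pairs $(\lambda,a)$ keeping all entries in $[1,X]$, and note that the implicit constant may depend on the fixed solution. Your additional checks (explicit binomial verification of the invariance, distinctness of the tuples over distinct pairs, preservation of off-diagonality by injectivity) are fine elaborations of steps the paper treats briefly.
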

	\begin{proof}
	The system (\ref{eqn:vinogradovsystem}) is translation-dilation invariant, so that a particular tuple $\mathbf{x}$ is a solution if and only if $q \mathbf{x} + \mathbf{h}$ is, for any dilation factor $q$ and any shift $\mathbf{h}$ (see e.g. \cite[\S 3.5]{Pie19}). Let $\mathbf{x}= (x_1,\ldots,x_{2s})$ be the presumed off-diagonal solution. Then set $\mathbf{h} = (h,h,\ldots, h)$; for any $1 \leq q < X/\max\{x_i\}$ and any $1 \leq h \leq X-q\max\{x_i\}$ we have that $ \mathbf{y} = q\mathbf{x} + \mathbf{h}$ is also an off-diagonal  solution with each entry $1 \leq y_i \leq X$. Given a particular off-diagonal solution $\mathbf{x}$, this yields $\gtrsim_{\max\{x_i\}} X^2$ distinct off-diagonal solutions. The dependence in this lower bound on $\max\{x_i\}$ is allowable, since $\mathbf{x}$ is fixed once and for all, and we may take $X$ to be arbitrarily large.
	\end{proof}

This phenomenon, combined with the importance to our proof that we are in the purely-diagonal regime, leads us to ask: given $n$, what is the least $s$ for which there is at least one off-diagonal solution to (\ref{eqn:vinogradovsystem})? This is a case of the classical Prouhet-Tarry-Escott problem, which  remains open in general, since early work in the  1850's (see e.g. \cite[\S 21.9]{HW08}). If we denote by $P(n)$ the least such $s$, then Lemma \ref{lemma_diag} shows that $P(n) \geq n+1$. It is known for all $n$ that $P(n) \leq n(n+1)/2 +1$ \cite[Thm. 409]{HW08}, but one might expect that it can be significantly smaller. In fact for $1 \leq n \leq 9$ and $n=11$, specific off-diagonal solutions have been exhibited by various authors, which confirm that $P(n) = n+1$ in these cases; see the  end-notes to the discussion in \cite[\S 21.9]{HW08}.
	Thus, by Lemma \ref{lemma_off_diag}, a method of proof that aims to obtain a square function estimate analogous to Theorem \ref{thm:main} for $L^{2n+2}$ must be able to accommodate a significant presence of off-diagonal solutions.
	
\section{Proof of Theorem \ref{thm_equiv}: equivalence between discrete decoupling and counting}\label{sec_discrete}

	For the proof of Theorem \ref{thm_equiv} we begin by observing that since 
	\[J_{s,\phi}(\mathcal{S}) = \big\| \sum_{j\in \mathcal{S}} e(\phi(j)\cdot \alpha)\big\|_{L^{2s}([0,1]^n)}^{2s},\]
	 the assumption \eqref{eqn:countingasm} is equivalent to the statement that the inequality
	\begin{equation}\label{a_ineq}
	 \big\| \sum_{j=1}^N a_j e(\phi(j)\cdot \alpha) \big\|_{L^{2s}([0,1]^n)} \le c^{1/2s} \big(\sum_{j=1}^N |a_j|^p\big)^{1/p}
	 \end{equation}
	holds for all $a=(a_j)_j$ of the form $a = \mathbf{1}_{\mathcal{S}}$ for some $\mathcal{S}\subset \{1,\dots,N\}$, where $p=2s/\theta$. 
		We recall the definition of the norms 
	\[ \|a\|_{\ell^p}=(\sum_{j=1}^N |a_j|^p)^{1/p} = p^{1/p} (\int_0^\infty s^{p-1} \lambda_a(s)ds)^{1/p}\]
	and
 \[\|a\|_{\ell^{p,1}}=\int_0^\infty \lambda_a^{1/p}(s) ds,\]
  where $\lambda_a(s) = \# \{ j \in \{1,\ldots,N\} \ : \ |a_j| > s \}$.
	Upon defining a function $T : \C^N \to [0,\infty)$ 
	by 
	\[T(a) = \big\| \sum_{j=1}^N a_j e(\phi(j)\cdot \alpha) \big\|_{L^{2s}([0,1]^n)},\]
 the inequality (\ref{a_ineq}) can be written as the statement that $T(\mathbf{1}_\mathcal{S})\leq c^{1/2s}\| \mathbf{1}_S\|_{\ell^p}$ holds for all $\mathcal{S}\subset \{1,\dots,N\}$. We then obtain Theorem \ref{thm_equiv} by an application of the following general fact.
	
	\begin{lem}\label{lem:weaknorm}
		Let $p\in (1,\infty), c\in (0,\infty)$ and let $T:\C^N\to [0,\infty)$ be a sublinear function such that 
		\begin{equation}\label{T_prop}
		T(\mathbf{1}_\mathcal{S})\leq C \|\mathbf{1}_\mathcal{S}\|_{\ell^p} \; \text{ holds for all $\mathcal{S}\subset \{1,\dots,N\}$.}
		\end{equation}
		 Then
		\[ T(a) \le c' (1+(\log\,N)^{1/p'}/p) \|a\|_{\ell^p} \]
		holds for all $a\in \C^N$, where $c' = 2^{1/p}4^{1/p'}C$.
	\end{lem}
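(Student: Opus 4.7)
The plan is to route the bound through the Lorentz norm $\ell^{p,1}$: first I would show that $T(a)\lesssim C \|a\|_{\ell^{p,1}}$ for every $a\in \C^N$ using a layer-cake decomposition together with the hypothesis applied to indicator vectors, and then compare the Lorentz norm to $\ell^p$ on the finite-dimensional space $\C^N$, which is where the $(\log N)^{1/p'}$ loss enters.

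For the first step, I would begin by noting that sublinearity of $T$ combined with the hypothesis applied to singletons yields $T(e_j) \le C$, and the reverse-triangle inequality $|T(a) - T(b)| \le T(a-b) \le C \|a-b\|_{\ell^1}$ then forces $T$ to be Lipschitz continuous on $\C^N$. In particular, a Bochner-type integral inequality
\[ T\Bigl(\int_0^\infty b(t)\,dt\Bigr) \le \int_0^\infty T(b(t))\,dt \]
holds for continuous curves $b:[0,\infty) \to \C^N$ with integrable $T(b(\cdot))$, by Riemann-sum approximation. For a nonnegative real vector, the pointwise identity $a = \int_0^\infty \mathbf{1}_{\{a>t\}}\,dt$ together with this inequality and the hypothesis yields
\[ T(a) \le \int_0^\infty T(\mathbf{1}_{\{a>t\}})\,dt \le C\int_0^\infty |\{a>t\}|^{1/p}\,dt = C\|a\|_{\ell^{p,1}}. \]
For general complex $a$, I would split $a$ into real and imaginary parts and each of those into positive and negative parts, using $T(i\,\cdot)=T(\cdot)$; since $|b_j|\le|a_j|$ entrywise implies $\|b\|_{\ell^{p,1}}\le \|a\|_{\ell^{p,1}}$, each of the four pieces is controlled by $\|a\|_{\ell^{p,1}}$.

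For the second step, I would express the Lorentz norm via the non-increasing rearrangement $a_1^*\ge\cdots\ge a_N^*$:
\[ \|a\|_{\ell^{p,1}} = \sum_{j=1}^N \bigl(j^{1/p} - (j-1)^{1/p}\bigr)\, a_j^*, \]
and apply H\"older's inequality with exponents $p$ and $p'$, identifying the first factor with $\|a\|_{\ell^p}$. The remaining factor $\bigl(\sum_{j=1}^N (j^{1/p}-(j-1)^{1/p})^{p'}\bigr)^{1/p'}$ I would bound by observing, via the mean value theorem applied to $x \mapsto x^{1/p}$, that for $j \ge 2$ the summand is at most $p^{-p'}(j-1)^{(1/p-1)p'} = p^{-p'}(j-1)^{-1}$. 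Hence the sum is at most $1 + p^{-p'}(1+\log N)$, and taking $p'$-th roots using $(1+x)^{1/p'}\le 1+x^{1/p'}$ yields $\|a\|_{\ell^{p,1}} \le \bigl(1 + p^{-1}(1+\log N)^{1/p'}\bigr)\|a\|_{\ell^p}$. Combining with the first step produces the desired inequality.

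There is no single hard obstacle; the argument is a textbook passage from restricted strong-type to strong-type with a logarithmic loss on finite-dimensional space. The main care required lies in bookkeeping the constants in the complex-to-real reduction of step~1, where a naive splitting into real/imaginary and positive/negative parts costs a factor of four, whereas the sharp value $c'=2^{1/p}4^{1/p'}C$ stated in the lemma is recovered only by a more efficient decomposition of complex vectors.
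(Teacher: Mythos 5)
Your route is essentially the paper's: pass from the restricted hypothesis to $T(a)\le C\|a\|_{\ell^{p,1}}$ for nonnegative vectors, compare $\ell^{p,1}$ with $\ell^p$ at a logarithmic cost, and reduce complex $a$ to four nonnegative pieces. The substance is correct, but as written your argument does not quite deliver the statement's constants, and your closing remark misidentifies where the stated constant comes from. Two small repairs in step 1 first: the curve $t\mapsto \mathbf{1}_{\{a>t\}}$ is a step function, not continuous, so your Bochner-type inequality ``for continuous curves'' does not literally apply to it; but no such device is needed, since for $a\in[0,\infty)^N$ the layer-cake representation is a finite sum $a=\sum_k (a_{(k)}-a_{(k-1)})\mathbf{1}_{\mathcal{S}_k}$ over nested level sets, and finite sublinearity plus the hypothesis gives $T(a)\le C\sum_k (a_{(k)}-a_{(k-1)})|\mathcal{S}_k|^{1/p}=C\|a\|_{\ell^{p,1}}$ directly. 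Second, your discrete rearrangement-plus-H\"older-plus-mean-value comparison yields $\|a\|_{\ell^{p,1}}\le\big(1+p^{-1}(1+\log N)^{1/p'}\big)\|a\|_{\ell^p}$, since the harmonic sum is about $\log N+O(1)$; this is harmless but weaker than the $\big(1+p^{-1}(\log N)^{1/p'}\big)$ claimed, which the paper obtains (Lemma \ref{lemma_norm}) by splitting $\int_0^\infty\lambda_a^{1/p}(s)\,ds$ at $N^{-1/p}\|a\|_{\ell^p}$ and $\|a\|_{\ell^p}$ and applying H\"older only on the middle range.

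The more substantive point is the complex-to-real reduction. You stop at a factor $4$ and assert that the stated $c'=2^{1/p}4^{1/p'}C$ would require ``a more efficient decomposition of complex vectors''; it does not. The paper uses exactly the naive split into $a_r^{\pm},a_i^{\pm}$, applies the $\ell^{p,1}$-to-$\ell^p$ comparison to each of the four pieces, then H\"older on the four-term sum (cost $4^{1/p'}$), and finally exploits that $a_r^{+}$ and $a_r^{-}$ (resp.\ $a_i^{+}$ and $a_i^{-}$) have disjoint supports, so $\|a_r^{+}\|_{\ell^p}^p+\|a_r^{-}\|_{\ell^p}^p=\|a_r\|_{\ell^p}^p$, and then $\|a_r\|_{\ell^p}^p+\|a_i\|_{\ell^p}^p\le 2\|a\|_{\ell^p}^p$, giving the remaining $2^{1/p}$. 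With that bookkeeping inserted (and either your step 2 accepted with the marginally larger logarithm, or replaced by the paper's integral splitting), your proof matches the lemma; as it stands it proves the estimate only up to a slightly larger constant than the one asserted.
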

The first step to prove Lemma \ref{lem:weaknorm}  is  the observation that, as in the general Lorentz space theory (see e.g. \cite[Ch. V, \S 3]{SW71}), the restricted weak-type hypothesis (\ref{T_prop}) implies the estimate $T(a)\leq C \|a\|_{\ell^{p,1}}$ for any $a \in \C^N$ with non-negative entries. 
Thus given a general   $a \in \C^N$, we split it into real and imaginary parts $a_r,a_i$ and then, respectively, positive and negative parts, say $a_r^+,a_r^-,a_i^+,a_i^-$; then using the assumed sublinearity of $T$, we see that 
\[ T(a) \leq C\{  \|a_r^+\|_{\ell^{p,1}} +  \|a_r^-\|_{\ell^{p,1}} + \|a_i^+\|_{\ell^{p,1}} + \|a_i^-\|_{\ell^{p,1}} \}  . \]
What remains is to dominate each weak-type $\ell^p$ norm by the corresponding $\ell^p$ norm, which follows from applying Lemma \ref{lemma_norm} (below) term by term, followed by H\"older's inequality to the sum of four terms, resulting in
	\[ T(a) \leq 4^{1/p'} C ( 1 + p^{-1}(\log N)^{1/p'}))\{  \|a_r^+\|_{\ell^{p}}^p +  \|a_r^-\|_{\ell^{p}}^p + \|a_i^+\|_{\ell^{p}}^p + \|a_i^-\|_{\ell^{p}}^p \}^{1/p} .\]
Using the disjoint supports of $a_r^+$ and $a_r^-$, and similarly for $a_i^+$ and $a_i^-$, the right-hand side is equal to
\[4^{1/p'} C ( 1 + p^{-1}(\log N)^{1/p'}))\{  \|a_r^+  - a_r^{-}\|_{\ell^{p}}^p  +\|a_i^+ - a_i^-\|_{\ell^{p}}^p \}^{1/p} ,\]
which is in turn bounded above by 
\[2^{1/p}4^{1/p'} C ( 1 + p^{-1}(\log N)^{1/p'}))\|a\|_{\ell^p} ,\]
completing the proof of Lemma \ref{lem:weaknorm}.
	\begin{lem}\label{lemma_norm}
		For any $ a\in \C^N$ and any $p \in [1,\infty)$ we have
		\[ \|a\|_{\ell^{p,1}} \leq \big( 1 + p^{-1}(\log N)^{1/p'} \big)  \|a\|_{\ell^{p}}. \]
	\end{lem}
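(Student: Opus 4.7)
The proof proceeds by splitting the integral defining $\|a\|_{\ell^{p,1}}$ at a judiciously chosen threshold and bounding the two pieces by two complementary estimates: one using the trivial bound $\lambda_a(s)\leq N$ for small $s$, and one using a Hölder-style argument that relates the tail of $\int \lambda_a^{1/p}$ to $\int s^{p-1}\lambda_a(s)\,ds = p^{-1}\|a\|_{\ell^p}^p$.

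The plan is as follows. Fix $t>0$, to be chosen later, and let $M=\|a\|_{\ell^\infty}$, noting that $\lambda_a(s)=0$ for $s\geq M$. For the low range, the uniform bound $\lambda_a(s)\leq N$ gives
\[ \int_0^t \lambda_a(s)^{1/p}\,ds \leq t N^{1/p}. \]
For the high range, write
\[ \int_t^M \lambda_a(s)^{1/p}\,ds = \int_t^M s^{-(p-1)/p}\bigl(s^{p-1}\lambda_a(s)\bigr)^{1/p}\,ds \]
and apply Hölder's inequality with conjugate exponents $p'$ and $p$:
\[ \int_t^M \lambda_a(s)^{1/p}\,ds \leq \Bigl(\int_t^M s^{-1}\,ds\Bigr)^{1/p'}\Bigl(\int_0^\infty s^{p-1}\lambda_a(s)\,ds\Bigr)^{1/p} = (\log(M/t))^{1/p'} p^{-1/p}\|a\|_{\ell^p}, \]
where the last equality uses the identity $\|a\|_{\ell^p}^p = p\int_0^\infty s^{p-1}\lambda_a(s)\,ds$ recalled in the excerpt.

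Now choose $t=\|a\|_{\ell^p}/N^{1/p}$. Then $tN^{1/p}=\|a\|_{\ell^p}$, and since $M=\|a\|_{\ell^\infty}\leq \|a\|_{\ell^p}$ we have $M/t\leq N^{1/p}$, hence $\log(M/t)\leq p^{-1}\log N$. Combining the two ranges, using $p^{-1/p'}\cdot p^{-1/p}=p^{-1}$, yields
\[ \|a\|_{\ell^{p,1}} \leq \|a\|_{\ell^p} + p^{-1/p'}(\log N)^{1/p'}\cdot p^{-1/p}\|a\|_{\ell^p} = \bigl(1 + p^{-1}(\log N)^{1/p'}\bigr)\|a\|_{\ell^p}, \]
as desired. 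If $M\leq t$, the high-range integral is trivially zero and the bound follows immediately from the low-range estimate.

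There is no real obstacle here; the only delicate point is matching the constant claimed in the lemma. The algebraic collapse $p^{-1/p'-1/p}=p^{-1}$ together with the $N^{1/p}$-bound on $M/t$ coming from the elementary inequality $\|a\|_{\ell^\infty}\leq \|a\|_{\ell^p}$ both conspire to deliver the exact constant $1+p^{-1}(\log N)^{1/p'}$, so the choice $t=\|a\|_{\ell^p}N^{-1/p}$ is essentially forced by the requirement of sharp bookkeeping.
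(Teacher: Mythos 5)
Your proof is correct and follows essentially the same route as the paper: split $\int_0^\infty \lambda_a^{1/p}$ at the threshold $N^{-1/p}\|a\|_{\ell^p}$, bound the low range by the trivial estimate $\lambda_a\le N$, and treat the high range by H\"older against $\int_0^\infty s^{p-1}\lambda_a(s)\,ds=p^{-1}\|a\|_{\ell^p}^p$. The only (cosmetic) difference is how the upper endpoint of the high-range integral is controlled: you truncate at $\|a\|_{\ell^\infty}$ and use $\|a\|_{\ell^\infty}\le\|a\|_{\ell^p}$, whereas the paper notes via Chebyshev and the integrality of $\lambda_a$ that $\lambda_a(s)=0$ for $s>\|a\|_{\ell^p}$; both yield the same constant.
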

	
	\begin{proof}
		By Chebyshev's inequality, $\lambda_a^{1/p}(s)\le s^{-1} \|a\|_{\ell^p}$ for all $s>0$.
		Observe that $\lambda_a(s)$ is a non-negative integer no greater than $N$; in particular, it must be zero if it is less than one, which implies that $\lambda_a(s) = 0$ for $s > \|a\|_{\ell^{p}}$. Therefore,
		\[
		\begin{aligned}
		\int_0^\infty \lambda_a^{1/p} (s) ds & \leq \int_0^{N^{-\frac{1}{p}} \|a\|_{\ell^{p}}} N^{1/p} ds  + \int_{N^{-\frac{1}{p}} \|a\|_{\ell^{p}}}^{\|a\|_{\ell^{p}}} \lambda_a ^{1/p}(s) ds\\
		& \le \|a\|_{\ell^p} + \big( \int_0^\infty s^{p-1} \lambda_a(s) ds \big)^{1/p} \big( \int_{N^{-\frac1p}\|a\|_{\ell^p}}^{\|a\|_{\ell^p}}s^{-1} ds \big)^{1/p'}\\
		& = \big( 1 + p^{-1}(\log N)^{1/p'} \big) \|a\|_{\ell^p},
		\end{aligned}\]
		where we have applied H\"older's inequality in the penultimate step.
	\end{proof}

	\section{Non-degenerate curves: linear independence of derivatives at separated points  }
	
	In this section we begin the proof of Proposition \ref{prop_ga_diag} by proving two results on the linear independence of derivatives of $\ga'(t)$ when $t$ is evaluated at distinct points. 
	The first result is motivated by an observation 
	in the special case $\gamma(t)=(t,t^2/2,\cdots,t^n/n)$: the Vandermonde determinant shows that
	for any $u_1,\dots,u_n\in\R$,
	\begin{equation}\label{Vandermonde}
	\det(\gamma'(u_1),\cdots,\gamma'(u_n)) = \prod_{1\le i<j\le n} (u_j - u_i) .
	\end{equation}
	Thus in particular if the points $u_j$ are separated, the determinant is well-controlled. We now prove comparable   upper and lower bounds for this determinant, in the general case of a non-degenerate curve $\ga$. 
	
	\begin{prop}\label{lem:detcond}
		Let $\gamma: [0,1] \to \R^n$ be a $C^n$ curve.\\
		(a) There exists a constant $C = C(\ga,n)$ such that for every $0<u_1<\dots<u_n<1$ we have
		\begin{equation}\label{eqn:detcond0}
		|\det(\gamma'(u_1), \dots, \gamma'(u_n))| \leq C \prod_{1\le i<j\le n} (u_j-u_i).
		\end{equation}
		(b) Suppose furthermore that $\gamma$ is non-degenerate. Then there exists a constant $C' = C'(\ga,n)$ and  $\delta_0=\delta_0(\gamma,n)>0$ such that for every $0<u_1<\dots<u_n<1$ with $u_n-u_1<\delta_0$ we have
		\begin{equation}\label{eqn:detcond1}
		|\det(\gamma'(u_1), \dots, \gamma'(u_n))| \geq C' \prod_{1\le i<j\le n} (u_j-u_i).
		\end{equation}
	\end{prop}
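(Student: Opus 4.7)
The plan is to reduce both parts to a single algebraic identity obtained by column operations. Writing $[\gamma'; u_1, \ldots, u_k]$ for the standard divided difference of $\gamma'$ at $u_1, \ldots, u_k$ (defined recursively and symmetric in its arguments), the goal is to establish
\[
\det(\gamma'(u_1),\ldots,\gamma'(u_n)) = \prod_{1\le i<j\le n}(u_j-u_i)\,\cdot\,\det M(u_1,\ldots,u_n),
\]
where $M(u_1,\ldots,u_n)$ is the matrix whose $k$-th column is $[\gamma';u_1,\ldots,u_k]$. I would derive this by iterating $k=1,\ldots,n-1$: at step $k$, subtract the current column $k$ from each column $j>k$ and factor out the scalar $(u_j-u_k)$, using the symmetry identity
\[ [\gamma';u_1,\ldots,u_{k-1},u_j] - [\gamma';u_1,\ldots,u_{k-1},u_k] = (u_j-u_k)\,[\gamma';u_1,\ldots,u_k,u_j]. \]
Accumulating the scalars produced across all $n-1$ steps recovers exactly the Vandermonde product.

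Once this identity is in hand, both parts follow from two classical consequences of the Hermite--Genocchi integral representation (available because $\gamma\in C^n$ implies $\gamma'\in C^{n-1}$): first, the uniform bound $|[\gamma';u_1,\ldots,u_k]|\le \|\gamma^{(k)}\|_{L^\infty([0,1])}/(k-1)!$; second, joint continuity of $(u_1,\ldots,u_k)\mapsto [\gamma';u_1,\ldots,u_k]$ on $[0,1]^k$ together with the coalescent value $[\gamma';u,\ldots,u]=\gamma^{(k)}(u)/(k-1)!$.

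For part (a), the first bound says every column of $M(u_1,\ldots,u_n)$ is uniformly bounded in terms of $\gamma$ and $n$, hence $|\det M(u_1,\ldots,u_n)|\le C(\gamma,n)$, and the claim follows. For part (b), the coalescent limit yields
\[ \det M(u,\ldots,u) = \Big(\prod_{k=1}^{n-1} k!\Big)^{-1}\det(\gamma'(u),\gamma''(u),\ldots,\gamma^{(n)}(u)), \]
which by the non-degeneracy hypothesis (\ref{eqn:nondeg}) and compactness of $[0,1]$ is bounded below in absolute value uniformly in $u$. Uniform continuity of $\det M$ on $[0,1]^n$ then furnishes a $\delta_0=\delta_0(\gamma,n)>0$ such that $|\det M(u_1,\ldots,u_n)|$ remains bounded away from zero whenever $u_n-u_1<\delta_0$, giving (b).

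No single step is really a bottleneck: the column-reduction identity is the technical heart but is purely algebraic, and the remaining estimates are standard consequences of Hermite--Genocchi together with compactness. The only subtlety is handling divided differences cleanly when some $u_i$'s coincide, which the integral representation resolves automatically by producing a continuous extension to the full cube $[0,1]^n$.
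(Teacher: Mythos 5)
Your argument is correct, and it reaches the conclusion by a genuinely different mechanism than the paper. You factor the determinant algebraically: Newton-style column reduction with the symmetry identity for divided differences extracts the Vandermonde product exactly, leaving $\det M$ with columns $[\gamma';u_1,\ldots,u_k]$, and then Hermite--Genocchi supplies the uniform bound $|[\gamma';u_1,\ldots,u_k]|\le \|\gamma^{(k)}\|_\infty/(k-1)!$ for (a) and the continuous extension of $\det M$ to $[0,1]^n$ with coalescent value $\bigl(\prod_{k=1}^{n-1}k!\bigr)^{-1}\det(\gamma'(u),\ldots,\gamma^{(n)}(u))$, so that non-degeneracy, compactness, and uniform continuity near the diagonal give (b). The paper instead proves an integral identity by induction, using the alternating multilinearity of $\det$ and iterated mean value theorems to build a positive measure $\sigma_{u_1,\ldots,u_n}$ supported on $\{u_1\le w_1\le\cdots\le w_n\le u_n\}$ with
\begin{equation*}
\det(\gamma'(u_1),\ldots,\gamma'(u_n))=\int \det(\gamma'(w_1),\gamma''(w_2),\ldots,\gamma^{(n)}(w_n))\,d\sigma_{u_1,\ldots,u_n}(w),
\end{equation*}
computing the total mass to be $c_n\prod_{i<j}(u_j-u_i)$ by testing against the moment curve; both bounds then come from pointwise upper and lower bounds on the torsion determinant over the support of the measure. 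The two routes are close in spirit (both reduce to $\det(\gamma',\ldots,\gamma^{(n)})$ at nearby or coalescing points after extracting the Vandermonde factor), but yours leans on classical divided-difference machinery and handles the lower bound by continuity at the diagonal, whereas the paper's keeps the determinant inside a single integral against a positive measure, which lets it avoid divided differences entirely and makes the lower bound an immediate consequence of positivity of the integrand; your version buys a somewhat more standard toolkit and an explicit continuous extension of the factored quantity to the whole cube, at the cost of verifying the column-reduction identity and the vector-valued Hermite--Genocchi facts. One small point to make explicit in a write-up: since $u_1<\cdots<u_n$, the Vandermonde product is positive, so taking absolute values in your identity is harmless, and the reduction identity itself is only needed at distinct nodes, with the extension to coincident nodes used solely through the continuity of $\det M$.
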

	Substantially more refined estimates of the type exhibited in Proposition \ref{lem:detcond} have been obtained recently in  \cite{DLW09,DW10} in the case of polynomial curves.
	However, we do not require such a refined estimate, and we give in this section a direct proof of the proposition, which does not require a delicate decomposition of $\R$.
	
	Furthermore, we prove a version of Proposition \ref{lem:detcond} that is averaged over certain intervals. 
	We will use the convention that an expression such as $\int_{J} \gamma'(u)  du$ denotes a column vector, whose $j$-th entry is the integral over $J$ of the $j$-th entry of the vector $\gamma'(u)$. 
	In particular, given a set of intervals $J_1,\ldots, J_n$ and a measurable function $\Xi$ supported on $\cup_{j=1}^n J_j$ with the property that $1 \leq |\Xi(t)| \leq n$ for all $t\in \cup_{j=1}^n J_j$, we define $A$ to be the $n \times n$ matrix whose $j$-th column is 
	\begin{equation}\label{A_col}
	\int_{J_j} \gamma'(u_j) |\Xi(u_j)| du_j.
	\end{equation}

	\begin{prop}\label{prop_4_det}
		Let $\gamma:[0,1]\to\R^n$ be a $C^n$ curve. Suppose $J_1,\dots,J_n$ are essentially disjoint closed intervals with $c(J_1)<\cdots<c(J_n)$.   Then for the $n\times n$ matrix $A$ defined above,
		\begin{itemize}
			\item[(a)] 
			\[ |\det(A)| \lesssim \Big(\prod_{j=1}^n |J_j|\Big) \Big(\prod_{1\le i<j\le n} (c(J_j)-c(J_i)) \Big). \]
			\item[(b)] Suppose furthermore that $\gamma$ is non-degenerate and that $\mathrm{diam}(\cup_{j=1}^n J_j)\le \delta_0$ where $\delta_0 = \delta_0(\ga,n)$ is as in Proposition~\ref{lem:detcond}. Then
			\[ |\det(A)| \gtrsim \Big(\prod_{j=1}^n |J_j|\Big) \Big(\prod_{1\le i<j\le n} (c(J_j)-c(J_i)) \Big). \]
			\item[(c)] Under the hypotheses of (b), there exists a constant $c_1 = c_1(\ga,n)$ such that the following holds. Let $R\ge 1$ and suppose that for some $1\le j_0\le n$, $|J_{j_0}|\ge c_1 R^{-1}$. Then   for every $v\in\R^n$ with $|v_{j_0}|\ge 1$ we have
			\[ |Av|\ge R^{-n}. \]
			In particular, $c_1$ depends only on $\ga,n$ and is independent of $J_1,\ldots, J_n$.
		\end{itemize}
	\end{prop}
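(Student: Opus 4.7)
My plan is to prove (a) and (b) by combining multilinearity of the determinant with the pointwise estimates in Proposition~\ref{lem:detcond}, and to extract (c) from (b) via an orthogonal projection argument whose denominator is an $(n-1)$-volume bounded by a wedge-product analogue of (a).

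For (a) and (b), start from the multilinearity identity
\[\det(A) = \int_{J_1\times\cdots\times J_n} \det(\gamma'(u_1),\dots,\gamma'(u_n)) \prod_{j=1}^n |\Xi(u_j)|\,du_1\cdots du_n.\]
For (a), Proposition~\ref{lem:detcond}(a) (applied after reordering the $u_j$, harmless in absolute value) bounds the integrand by $C\prod_{i<j}|u_j-u_i|$. Essential disjointness together with $c(J_1)<\cdots<c(J_n)$ gives $c(J_j)-c(J_i)\ge(|J_i|+|J_j|)/2$, hence $|u_j-u_i|\le 2(c(J_j)-c(J_i))$ for $u_i\in J_i$, $u_j\in J_j$, $i<j$; combined with $|\Xi|\le n$ and integration this yields (a). For (b), Proposition~\ref{lem:detcond}(b) shows the integrand is nonvanishing on the connected open set $\{u_1<\cdots<u_n\}$, hence of constant sign there by continuity, so one may pull the absolute value outside the integral. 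The pointwise lower bound $|u_j-u_i|\gtrsim c(J_j)-c(J_i)$ fails near the boundaries of the $J_j$; to circumvent this, restrict the integration to the sub-boxes $\widetilde{J_j}=[c(J_j)-|J_j|/4,\,c(J_j)+|J_j|/4]$, on which essential disjointness yields $u_j-u_i\ge(c(J_j)-c(J_i))/2$ for $i<j$. Together with $|\Xi|\ge 1$, integrating over these sub-boxes gives (b).

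For (c), let $P$ denote the orthogonal projection onto $\mathrm{span}\{a_j:j\neq j_0\}^\perp$. Since $Pa_j=0$ for $j\neq j_0$ and $Av=\sum_j v_j a_j$, one has $P(Av)=v_{j_0}Pa_{j_0}$, and hence
\[|Av|\ge |v_{j_0}|\cdot|Pa_{j_0}|=|v_{j_0}|\cdot\frac{|\det(A)|}{V_{n-1}(\{a_j\}_{j\neq j_0})},\]
where $V_{n-1}$ denotes the $(n-1)$-dimensional volume spanned by the indicated vectors. Part (b) lower-bounds the numerator. For the denominator, multilinearity of the wedge product expresses $\bigwedge_{j\neq j_0}a_j$ as an $(n-1)$-fold integral over $\prod_{j\neq j_0}J_j$ of $\bigwedge_{j\neq j_0}\gamma'(u_j)$ weighted by $\prod_{j\neq j_0}|\Xi(u_j)|$; bounding the integrand by a wedge-product analogue of Proposition~\ref{lem:detcond}(a) and applying the same essential-disjointness trick as in (a) yields
\[V_{n-1}(\{a_j\}_{j\neq j_0})\lesssim\prod_{j\neq j_0}|J_j|\prod_{\substack{a<b\\a,b\neq j_0}}(c(J_b)-c(J_a)).\]
Dividing and using $|v_{j_0}|\ge 1$ gives $|Av|\gtrsim|J_{j_0}|\prod_{i\neq j_0}|c(J_{j_0})-c(J_i)|$. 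Essential disjointness yields $|c(J_{j_0})-c(J_i)|\ge|J_{j_0}|/2$, so $|Av|\gtrsim|J_{j_0}|^n/2^{n-1}\ge c_1^n R^{-n}/2^{n-1}$, which exceeds $R^{-n}$ once $c_1$ is chosen sufficiently large in terms of $\gamma$ and $n$.

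The main obstacle is the wedge-product analogue of Proposition~\ref{lem:detcond}(a) invoked above; it is not stated explicitly in the excerpt, but the divided-difference mechanism underlying that proposition should adapt verbatim. Iterating column operations $\gamma'(u_j)\mapsto\gamma'(u_j)-\gamma'(u_1)$ inside the wedge $\bigwedge_{j\neq j_0}\gamma'(u_j)$ pulls out a Vandermonde-type factor $\prod_{a<b,\,a,b\neq j_0}(u_b-u_a)$, leaving behind a wedge of the form $\gamma'(\xi_1)\wedge\gamma''(\xi_2)\wedge\cdots\wedge\gamma^{(n-1)}(\xi_{n-1})$ whose norm is controlled by a constant depending only on $\gamma$ and $n$. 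Alternatively, Cauchy-Binet expresses $V_{n-1}^2$ as a sum of squared $(n-1)\times(n-1)$ minors of $(a_j)_{j\neq j_0}$, each bounded by the same argument applied to the $C^{n-1}$ curve obtained by deleting one component of $\gamma$ (non-degeneracy of the reduced curve is not needed for the upper bound).
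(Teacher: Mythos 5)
Your proofs of (a) and (b) coincide with the paper's: the same multilinear expansion of $\det(A)$, the same appeal to Proposition~\ref{lem:detcond}, the same sign argument to pull the absolute value out, and the same restriction to half-length subintervals centered at the $c(J_j)$ for the lower bound. For (c) you take a route that is differently packaged but equivalent in substance: the paper writes $v=A^{-1}(Av)$ and bounds the entries of the $j_0$-th row of $A^{-1}$ via Cramer's rule, estimating each cofactor $\det B_{i,j_0}$ by part (a) in dimension $n-1$ applied to the curve with one coordinate deleted, and then concludes by contradiction from $|v_{j_0}|\le \tfrac1{100}$; your base-times-height identity $|Pa_{j_0}|=|\det(A)|/V_{n-1}$ is, by Cauchy--Binet, exactly the reciprocal of the Euclidean norm of that same row, so both arguments consume identical inputs (the lower bound (b) for $|\det(A)|$ and a dimension-$(n-1)$ upper bound for the minors), and your closing step $|Av|\gtrsim |J_{j_0}|\prod_{i\ne j_0}|c(J_{j_0})-c(J_i)|\gtrsim c_1^n R^{-n}$ with $c_1$ large is a direct version of the paper's conclusion. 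The ``wedge-product analogue of (a)'' that you flag as the main obstacle is not actually a gap: your Cauchy--Binet fallback, bounding each $(n-1)\times(n-1)$ minor by part (a) for the coordinate-deleted $C^{n-1}$ curve (no non-degeneracy needed), is precisely what the paper does; moreover the paper's identity \eqref{eqn:detindclaim} together with the mass formula \eqref{eqn:sigmamass} is already stated for arbitrary alternating $m$-linear forms, so it applies verbatim with $m=n-1$ to each component of the wedge. The only loose phrasing in your write-up is the mean-value-style claim that the wedge collapses to a single point evaluation $\gamma'(\xi_1)\wedge\gamma''(\xi_2)\wedge\cdots\wedge\gamma^{(n-1)}(\xi_{n-1})$ (one should integrate against the measure $\sigma$, not evaluate at one point), but since your Cauchy--Binet alternative is airtight this does not affect correctness.
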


	\subsection{Proof of Proposition \ref{lem:detcond}}

	The idea is to use the fact that the determinant is an alternating multilinear form and the mean value theorem.
	It will be convenient to first prove a general identity in this spirit, see \eqref{eqn:detindclaim} below. We use the following setup: for every integer $m\ge 1$ and real numbers $t_1<\cdots<t_m$ we define a non--negative measure $\sigma_{t_1,\dots,t_m}$ on $\R^m$ as follows. If $m=1$, then $\sigma_{t_1}$ is the Dirac measure at $t_1$, i.e.
	\[ \int_\R \varphi(u) d\sigma_{t_1}(u) = \varphi(t_1).\]
	If $m\ge 2$, then we define $\sigma_{t_1,\dots,t_m}$ recursively by
	\begin{equation}\label{eqn:defsigma} \int\limits_{\R^m} \varphi(u) d\sigma_{t_1,\dots,t_m}(u) = 
	\int\limits_{t_1}^{t_2} \cdots\int\limits_{t_{m-1}}^{t_m} \int\limits_{\R^{m-1}} \varphi(t_1, v) d\sigma_{s_2,\dots,s_m}(v) ds_m \cdots ds_2.
	\end{equation}
	Observe that $\sigma_{t_1,\dots,t_m}$ is supported on the compact set $\{t_1~\le~ u_1~\le~\cdots~\le~u_m~\le~ t_m\}$.
	We prove the following general statement about the measure $\sigma_{t_1,\ldots, t_m}$.
	\begin{lem}
		For every $m\ge 1$ and all real numbers $t_1<\dots<t_m$, the non--negative measure $\sigma_{t_1,\dots,t_m}$ defined above has the following properties:\\
		(i) For every alternating $m$--linear form $\Lambda:(\R^n)^m \to \R$ and every $C^{m-1}$ map $h:[a,b]\to \R^n$, for all $a\le t_1<\cdots<t_m\le b$ we have
		\begin{equation}\label{eqn:detindclaim}
		\Lambda ( h(t_1), \dots, h(t_m) ) = \int_{\R^m} \Lambda(h(u_1), h'(u_2),\dots, h^{(m-1)}(u_m)) d\sigma_{t_1,\dots,t_m}(u).
		\end{equation}
		(ii) The mass of $\sigma_{t_1,\dots,t_m}$ is given by
		\begin{equation}\label{eqn:sigmamass} \sigma_{t_1,\dots,t_m}(\R^m) = c_m\prod_{1\le i<j\le m} (t_j - t_i),
		\end{equation}
		where $c_m = \big(\prod_{j=1}^m (j-1)!\big)^{-1}$.
	\end{lem}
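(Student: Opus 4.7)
The plan is to prove (i) and (ii) by induction on $m$, with the base case $m=1$ immediate: $\sigma_{t_1}$ is Dirac at $t_1$, both sides of \eqref{eqn:detindclaim} equal $\Lambda(h(t_1))$, the mass is $1$, and the empty product agrees with $c_1 = 1$.

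For the inductive step in (i), the key algebraic fact is that for any alternating $m$-linear form $\Lambda$,
\[ \Lambda(v_1, v_2, \ldots, v_m) = \Lambda(v_1,\, v_2 - v_1,\, v_3 - v_2,\, \ldots,\, v_m - v_{m-1}); \]
expanding the right side by multilinearity produces only one surviving term, because any occurrence of $-v_{j-1}$ in slot $j$ forces a repetition with slot $j-1$ (or, for $j=2$, with the fixed first slot), killing that term by the alternating property. Setting $v_j = h(t_j)$ and writing $h(t_j) - h(t_{j-1}) = \int_{t_{j-1}}^{t_j} h'(s_j)\,ds_j$, multilinearity then yields
\[ \Lambda(h(t_1),\ldots,h(t_m)) = \int_{t_1}^{t_2}\!\!\cdots\!\!\int_{t_{m-1}}^{t_m} \Lambda\bigl(h(t_1), h'(s_2),\ldots,h'(s_m)\bigr)\,ds_m\cdots ds_2. \]
Fixing $s_2 < \cdots < s_m$ and applying the inductive hypothesis to the alternating $(m-1)$-linear form $\widetilde\Lambda(w_2,\ldots,w_m) := \Lambda(h(t_1), w_2, \ldots, w_m)$ and the $C^{m-2}$ map $h'$ converts the inner $\Lambda$ into an integral against $\sigma_{s_2,\ldots,s_m}$; substituting and comparing with the recursion \eqref{eqn:defsigma}, in which $u_1$ is pinned to $t_1$, delivers (i).

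For (ii), rather than evaluating the iterated integrals $\int_{t_1}^{t_2}\cdots\int_{t_{m-1}}^{t_m}\sigma_{s_2,\ldots,s_m}(\R^{m-1})\,ds_m\cdots ds_2$ directly, I would feed a carefully chosen test pair into (i). Take $n = m$, $\Lambda = \det$, and $h(t) = (1,\, t,\, t^2/2!,\, \ldots,\, t^{m-1}/(m-1)!)$. The left side of \eqref{eqn:detindclaim} is the Vandermonde determinant scaled by $\prod_{j=0}^{m-1}(j!)^{-1}$, which equals $c_m \prod_{1 \le i < j \le m}(t_j - t_i)$. On the right, the matrix whose successive columns are $h(u_1), h'(u_2), \ldots, h^{(m-1)}(u_m)$ is lower triangular with $1$'s on the diagonal independently of the $u_k$, so the integrand is identically $1$ and the right side collapses to $\sigma_{t_1,\ldots,t_m}(\R^m)$, yielding (ii). The main obstacle is really only the alternating-form identity above and keeping the $\sigma$-indexing straight through the recursion; once these are in hand, the rest is structural bookkeeping against the definition \eqref{eqn:defsigma}.
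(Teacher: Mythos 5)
Your proposal is correct and follows essentially the same route as the paper: part (i) by the identical telescoping trick for alternating forms, the fundamental theorem of calculus, and induction applied to $\widetilde\Lambda = \Lambda(h(t_1),\cdot)$ with $h'$, and part (ii) by feeding $\Lambda=\det$ and a moment-type curve into (i). The only difference is cosmetic: you normalize the curve to $h(t)=(1,t,t^2/2!,\dots,t^{m-1}/(m-1)!)$ so the integrand on the right-hand side is identically $1$, whereas the paper takes $h=\gamma'$ with $\gamma(t)=(t,t^2/2,\dots,t^m/m)$ and divides out the resulting constant $\prod_{j=1}^m (j-1)!$ at the end.
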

	
	\begin{proof}
		We first prove (i) by induction on $m$. For $m=1$ the claim follows immediately from the definitions. Let us assume the inductive hypothesis that (i) holds for dimension $m-1$, for all alternating $(m-1)$-linear functions, and every $C^{m-2}$ map. Now let us assume that $\Lambda$ is an $m$-linear function and $h$ is a $C^{m-1}$ map. Since $\Lambda$ is alternating we have
		\[ \Lambda(h(t_1),\dots,h(t_m)) = \Lambda(h(t_1), h(t_2)-h(t_1), \dots, h(t_m)- h(t_{m-1})). \]
		By the mean value theorem this equals
		\[\int_{t_1}^{t_2} \cdot\cdot \int_{t_{m-1}}^{t_m} \Lambda(h(t_1), h'(s_2), \dots, h'(s_m)) ds_m \cdots ds_2. \]
		Applying the inductive hypothesis to the $(m-1)$--linear form given by $\widetilde{\Lambda} ~=~\Lambda (h(t_1), \cdot)$ and the map $h'$ in place of $h$, we obtain that the previous expression is equal to
		\[ \int_{t_1}^{t_2} \cdot\cdot \int_{t_{m-1}}^{t_m} \int_{\R^{m-1}} \Lambda(h(t_1), h'(u_2), \dots, h^{(m-1)}(u_m) ) d\sigma_{s_2,\dots,s_m}(u_2,\dots,u_m) ds_m \cdots ds_2 \]
		which by the definition \eqref{eqn:defsigma} equals
		\[ \int_{\R^{m}} \Lambda(h(u_1), h'(u_2), \dots, h^{(m-1)}(u_m) ) d\sigma_{t_1,\dots,t_m}(u).\]
		To prove (ii) we apply (i) with $m=n$, $\Lambda=\det$ and $h=\gamma'$, where $\gamma$ is the normalized moment curve $\gamma(t)=(t,t^2/2,\dots,t^m/m)$. Then the left-hand side of \eqref{eqn:detindclaim} is equal to the Vandermonde determinant
		\[ \det(\gamma'(t_1),\dots,\gamma'(t_m)) = \prod_{1\le i<j\le m} (t_j - t_i), \]
		while the right-hand side can be explicitly computed in this case as 
		\[ \int_{\R^m} \det(\gamma'(u_1), \gamma''(u_2),\dots, \gamma^{(m)}(u_m)) d\sigma_{t_1,\dots,t_n}(u) = \big(\prod_{j=1}^m (j-1)!\big)\cdot \sigma_{t_1,\dots,t_{m}}(\R^m), \]
		which proves (ii).
	\end{proof}
	
	We now apply this lemma in the case $m=n, \Lambda=\det, h=\gamma'$ to prove Proposition \ref{lem:detcond}. Given $0<u_1 < \cdots < u_n < 1$,  the identity \eqref{eqn:detindclaim} shows that
\begin{equation}\label{det_ga}
\det( \ga'(u_1), \dots, \ga'(u_n) ) = \int_{\R^n} \det(\ga'(w_1), \ga''(w_2),\dots, \ga^{(n)}(w_n)) d\sigma_{u_1,\dots,u_n}(w),
\end{equation}
	with $d\sigma_{u_1,\dots,u_n}$ supported in $\{u_1 \leq w_1 \leq \cdots \leq w_n \leq u_n\}$. For (a), since the map
		\[(u_1,\dots,u_n)\mapsto \det(\gamma'(u_1),\dots,\gamma^{(n)}(u_n))\] 
		is continuous, the integrand is uniformly bounded from above by some $C=C(\ga,n)$ on the support of the measure, so that (\ref{det_ga}) is bounded above by $C d\sigma_{u_1,\dots,u_n}(\R^n)$, from which (a) follows via (\ref{eqn:sigmamass}) (upon redefining $C$ to be $c_mC$).
		 To prove (b), since $\ga$ is non-degenerate we may assume without loss of generality that \[\det(\gamma'(w),\gamma''(w),\dots,\gamma^{(n)}(w))>0\]
	holds for every $w\in [0,1]$. By uniform continuity there exists $\delta_0>0$ such that
	\begin{equation}\label{eqn:strongtorsioncondition}
	\det(\gamma'(w_1), \gamma''(w_2), \dots, \gamma^{(n)}(w_n)) \geq C' >0
	\end{equation}
	holds for all $w_1,\cdots,w_n\in [0,1]$ satisfying $\max_{j=1,\dots,n} |w_1-w_j|\le \delta_0$, which certainly holds for any $w$ in the support of $\sigma_{u_1,\dots,u_n}(w),$ under the assumption in (b) that $u_n- u_1 < \delta_0$. 
	Applying this in (\ref{det_ga}) yields the lower bound
	$ \geq C' \int_{\R^n}  \sigma_{u_1,\dots,u_n}(w)dw,$
	which implies (b).

	\subsection{Proof of Proposition \ref{prop_4_det}}

	First, we observe that 
	\[
	\det(A) = \int_{J_1} \dots \int_{J_n} |\Xi(u_1)| \dots |\Xi(u_n)| \det \left( \gamma'(u_1) \, \cdots \, \gamma'(u_n) \right) du_1 \dots du_n.
	\]
We first prove (b) explicitly. In this case, part (b) of Proposition \ref{lem:detcond} implies that  in the assumed support of the integral, $|\det \left( \gamma'(u_1) \, \dots \, \gamma'(u_n) \right)| $ always obeys the lower bound (\ref{eqn:detcond1}), which is nonzero except possibly on the boundary of the region of integration; this   allows us to assume without loss of generality that the determinant is non-negative for every $u_1 \in J_1,\ldots, u_n \in J_n$. 
 Since $|\Xi(u_j)| \geq 1$ for all $u_j \in J_j$ we may conclude from the identity above that
	\[
	\det(A) \geq \int_{J_1'} \dots \int_{J_n'}  \det \left( \gamma'(u_1) \, \cdots \, \gamma'(u_n) \right) du_1 \dots du_n,
	\]
	in which  $J_j'$ is the interval that has the same center as $J_j$, but only half the length of $J_j$, so in particular the $J_j'$ are pairwise disjoint. Now we invoke \eqref{eqn:detcond1} to estimate the integrand on the right hand side from below.
	Since for any $u_i \in J_i'$ and $u_j \in J_j'$ we have $u_j - u_i \geq (c(J_j) - c(J_i))/2$ whenever $j>i,$  and $|J_j'| = |J_j|/2$, the lower bound in (b) follows. 
	To prove (a), one may follow analogous reasoning, except we apply absolute values inside the integral, and apply the upper bound in \eqref{eqn:detcond0} in place of  the lower bound \eqref{eqn:detcond1}.
	
	Finally, for the proof of (c) we will   write $v=A^{-1}(Av)$, so that if we know that $v$ has a large entry in the $j_0$-th place yet we can show   that every entry in the $j_0$-th row of $A^{-1}$ is very small (under the assumption that $|J_{j_0}| \geq c_1R^{-1}$), then we must conclude that $|Av|$ cannot also be very small.
	To compute $A^{-1}$ we will
	 make use of Cramer's rule, $A^{-1} = (\det A)^{-1} \mathrm{Cf}(A)^T$, in which we recall that the $i$-th entry in the $j$-th column of the cofactor matrix $\mathrm{Cf}(A)$ is given by the determinant of the $(n-1)\times (n-1)$ matrix $B_{ij}$ obtained by removing the $i$-th row and the $j$-th column from the matrix $A$. Thus to compute the $j_0$-th row of $A^{-1}$ we compute $\det B_{ij_0}$ for each $1 \leq i \leq n$. We apply the upper bound in (a) (for dimension $n-1$) to conclude that 
	\[ |\det(B_{i,j_0})| \lesssim \Big(\prod_{j\not=j_0} |J_j|\Big) \Big(\prod_{\substack{1\le j'<j\le n,\\j'\not=j_0, j\not=j_0}} (c(J_j)-c(J_{j'})) \Big). \]
On the other hand, $|\det A|$ satisfies the lower bound given in part (b), so upon taking the ratio as in Cramer's law, we see that 
each entry of the $j_0$th row of $A^{-1}$ is bounded above by
	\[
	C''  |J_{j_0}|^{-1} \prod_{\substack{1 \leq j \leq n \\ j \ne j_0}} |c(J_j)-c(J_{j_0})|^{-1},
	\]
in which $C'' = C''(\ga,n)$ is dependent only on $\ga,n$. We may now choose $c_1$ large enough so that under the hypothesis that $|J_{j_0}| \ge c_1 R^{-1}$, and consequently $|c(J_j)-c(J_{j_0})| \ge (c_1/2) R^{-1}$ for every $j \neq j_0$,  every entry in the $j_0$th row of $A^{-1}$ is bounded from above by $\frac{1}{100n} R^n$ (say). Now to conclude the argument, suppose that $|Av|<R^{-n}$ for some $v$ with $|v_{j_0}| \geq 1$. Writing $v=A^{-1}(Av)$, this implies $|v_{j_0}|\le \frac1 {100}$, a contradiction. This proves (c), completing the proof of the proposition.
	
	\section{Proof of Proposition \ref{prop_ga_diag} on essentially diagonal solutions }\label{sec_diagonal}

	Our proof of Proposition \ref{prop_ga_diag} will critically use Proposition \ref{prop_4_det}; let the constants $c_1=c_1(\gamma,n)$ and $\delta_0 = \delta_0(\gamma,n)$ be as specified in that proposition, and set $c_0 = nc_1$.  	We assume that $[0,1]$ has been dissected into intervals of length $R^{-1}$ denoted by $\{ R^{-1}[\ell,\ell+1]\,:\,0\le\ell<R\}$, and that all  intervals  in the following discussion belong to this set. We consider a collection $\mathcal{I}$ of such intervals for which (\ref{assps}) holds.	We will show that if the points $t_1,\ldots, t_n$ belong to intervals $I_1,\ldots, I_n$ and the points $s_{1} , \ldots ,s_{n}$ belong to intervals $I_1',\ldots, I_n'$, there is a  quantitative, strictly positive lower bound for 
	\[ \ga(t_1) +\cdots + \ga(t_n) - \ga(s_{1}) - \cdots - \ga(s_{n}) \]
unless the tuple $(I_1,\ldots, I_n)$  is a permutation of $(I_1',\ldots, I_n')$. 
	
	Fix tuples $(I_1,\dots,I_n)$ and $(I_1',\dots,I_n')$, and  fix $t_i\in I_i$ and $s_i\in I_i'$. By the fundamental theorem of calculus,
	\[
	\sum_{i=1}^n (\gamma(t_i) - \gamma(s_i) )= \sum_{i=1}^n \int_{s_i}^{t_i} \gamma'(t) dt = \int_0^1 \gamma'(t) \Xi(t) dt,
	\]
	where we define
	\begin{equation}\label{Xi_dfn}
	 \Xi(t) = \sum_{i=1}^n \chi_{[s_i, t_i)}(t). 
	 \end{equation}
	Here $\chi_{[a,b)}(t)$ is defined to equal $+1$ if $a\le t< b$ and $-1$ if $b\le t< a$ (and zero otherwise); this convention is chosen so that $\chi_{[a,b)}$ is always a right continuous function (even if $a > b$). 
	For the moment, let us denote by $J_i$ the interval $[s_i,t_i)$ if $s_i < t_i$ and  the interval $[t_i, s_i)$ it $t_i < s_i$. 
	
	To motivate how we proceed, let us assume temporarily that we are in the very special case in which the intervals $J_i$ are all  disjoint. Then $|\Xi(t)| \in \{0,1\}$ and hence
	\begin{equation}\label{Xi_decomp}
	\sum_{i=1}^n ( \gamma(t_i) - \gamma(s_i)) = \sum_{i=1}^n \varepsilon_i \int_{J_i} \gamma'(u_i) |\Xi(u_i)| du_i
	= \sum_{i=1}^n \varepsilon_i \int_{J_i} \gamma'(u_i)  du_i
	\end{equation}
	in which $\varepsilon_i \in \{\pm 1\}$ is the sign of $\Xi$ on $J_i$. Using the notation of the matrix $A$ defined column by column in (\ref{A_col}), we see that the right-hand side of (\ref{Xi_decomp}) is $Av$ for the vector $v = (\ep_1,\ep_2,\ldots,\ep_n)$. Since under the hypotheses of the proposition, $(I_1,\ldots, I_n)$ is not a permutation of $(I_1',\ldots, I_n')$, there exists some $j_0$ such that $I_{j_0} \neq I_{j_0}'$, so that by the separation condition, $|s_i - t_i| \geq c_0 R^{-1} \geq c_1R^{-1}$. Thus the conditions of Proposition \ref{prop_4_det} (c) are met, and we can conclude that in (\ref{Xi_decomp}) that $|Av| \geq R^{-n}$, thus proving (\ref{eqn:gammaest}) in this special case.
	
	The essential insight in proving Proposition \ref{prop_ga_diag} in full generality is that even when the intervals with endpoints defined by $s_i,t_i$ overlap, the support of $\Xi$ can be decomposed into $n$ essentially disjoint intervals, upon each of which $\Xi$ is only positive or only negative; consequently, a version of (\ref{Xi_decomp}) will  again be true.
	\begin{prop}\label{lemma_decomp}
With the collection $\mathcal{I}$ and constants $c_0, \delta_0$ as described above, fix tuples of intervals $(I_1,\ldots,I_n)$ and $(I_1',\ldots,I_n')$, as well as points $s_i \in I_i$ and $t_i \in I_i'$, and define $\Xi(t)$ as in (\ref{Xi_dfn}).  The support of $\Xi(t)$ can be written as a disjoint union of intervals, such that upon the interior of each interval, $\Xi(t)$ is either only positive or only negative. Moreover: 
		\begin{itemize}
			\item[(i)] if $\ell_0$ is the minimal number of intervals in such a disjoint union, then $\ell_0 \leq n$;\\
			\item[(ii)] if we denote these intervals by  $\widetilde{J}_1, \dots, \widetilde{J}_{\ell_0}$, then there exists $1\le j_0\le  \ell_0$ such that $|\widetilde{J}_{j_0}|\ge c_0 R^{-1}$.
			\item[(iii)] Consequently,  we may construct $n$ essentially disjoint closed subintervals $J_1, \dots, J_n$ of $[0,1]$, with $c(J_1) <  \dots < c(J_n)$, so that  for some $1 \leq j_0 \leq n$, $J_{j_0}$ has length $\geq (c_0/n) R^{-1}  = c_1 R^{-1}$, and so that for each $1 \leq j \leq n$, $\Xi$ is either only positive or only negative in the interior of $J_j$, with $1 \leq |\Xi| \leq n$ on $J_j$.
		\end{itemize}
	\end{prop}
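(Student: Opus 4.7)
The plan is to view $\Xi$ as a piecewise constant integer-valued function on $[0,1]$, whose jumps occur only at the $2n$ points $\{s_1,\ldots,s_n,t_1,\ldots,t_n\}$. Each signed indicator $\chi_{[s_i,t_i)}$ contributes one up-jump of magnitude $+1$ and one down-jump of magnitude $-1$, so the total up-jump mass equals the total down-jump mass equals $n$; moreover $|\Xi|\le n$ everywhere and $\Xi$ vanishes outside a compact set. Viewing $\Xi$ as a walk on $\Z$ that starts and ends at $0$, define an \emph{excursion} to be a maximal interval on which $\Xi$ is nonzero and of a single sign. Inside each excursion the total up-jump mass equals the total down-jump mass, and this common value is a positive integer $m_k\ge 1$. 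Summing gives $\sum_k m_k$ no larger than the total up-jump mass $n$, so $\ell_0\le n$; this proves (i).

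For (ii) I argue by contradiction: suppose every excursion has length $<c_0R^{-1}$. Since jumps of $\Xi$ occur only at points lying in $\bigcup_{I\in\mathcal{I}} I$, the function $\Xi$ is constant on each of the gaps between consecutive intervals of $\mathcal{I}$ (in the left-to-right ordering). Each such gap has length at least $c_0R^{-1}$ by the separation hypothesis, so if $\Xi$ were nonzero on some gap, the excursion containing that gap would have length $\ge c_0R^{-1}$, a contradiction. Hence $\Xi\equiv 0$ on every gap. Now enumerate the intervals of $\mathcal{I}$ from left to right as $K_1<K_2<\cdots<K_M$ and set
\[ d_p = |\{i : I_i=K_p\}| - |\{i : I_i'=K_p\}|. \]
The value of $\Xi$ on the gap between $K_m$ and $K_{m+1}$ equals the net count of arrows $(s_i,t_i)$ crossing that gap rightward, which rearranges (by classifying each arrow according to which side of the gap each of its endpoints lies on) to $\sum_{p\le m} d_p$. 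Setting this equal to $0$ for all $m$ forces $d_p=0$ for every $p$, so that the multisets $\{I_i\}_{i=1}^n$ and $\{I_i'\}_{i=1}^n$ coincide, contradicting the hypothesis of Proposition \ref{prop_ga_diag} that they are not permutations.

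Finally, for (iii), once (i)--(ii) produce $\ell_0\le n$ constant-sign excursions $\widetilde{J}_1,\ldots,\widetilde{J}_{\ell_0}$ with $|\widetilde{J}_{j_0}|\ge c_0R^{-1}$ for some $j_0$, I subdivide $\widetilde{J}_{j_0}$ into $n-\ell_0+1$ essentially disjoint closed sub-intervals of equal length, each of length at least $c_0R^{-1}/(n-\ell_0+1)\ge c_0R^{-1}/n=c_1R^{-1}$. Combined with the remaining $\ell_0-1$ excursions this produces exactly $n$ essentially disjoint closed intervals on each of which $\Xi$ is of a single sign with $1\le|\Xi|\le n$, and at least one of which has length $\ge c_1R^{-1}$; relabeling by the left-to-right order of their centers gives the required $J_1,\ldots,J_n$. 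The main obstacle is the flow-balance argument in (ii); the excursion-counting in (i) and the packaging in (iii) are by comparison routine.
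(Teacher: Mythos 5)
Your proposal is correct in substance, and for part (ii) it takes a genuinely different route from the paper. The paper proves (ii) by introducing the auxiliary function $\Theta(t)=\sum_i \chi_{[c(I_i),c(I_i'))}(t)$ built from the interval centers, proving by induction (Lemma \ref{lem:comb}) that $\Theta\equiv 0$ forces the centers to be a permutation of one another, and then transferring nonvanishing of $\Theta$ at a point of $\big(\bigcup_i (I_i\cup I_i')\big)^c$ to $\Xi$, using that the two functions agree there and that the relevant component is long by the separation hypothesis. You instead compute the constant value of $\Xi$ on the gap between consecutive intervals $K_m<K_{m+1}$ of $\mathcal{I}$ directly as the prefix sum $\sum_{p\le m} d_p$ of the multiplicity differences, so that vanishing on every gap forces $d_p=0$ for all $p$, i.e. the tuples are permutations. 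This replaces Lemma \ref{lem:comb} by a short direct computation adapted to the interval structure (in effect a discrete analogue of that lemma), while sharing the same geometric endgame: a nonzero constant value on a gap of length at least $c_0R^{-1}$ produces a long constant-sign interval. Your part (iii) is the paper's splitting argument.

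Part (i) needs a patch as written. The claim that within each excursion the total up-jump mass equals the total down-jump mass fails when excursions of opposite sign abut: for instance $t_1<s_1=s_2<t_2$ gives $\Xi=-1$ on $[t_1,s_1)$ and $\Xi=+1$ on $[s_1,t_2)$, and the jump at the shared point straddles both excursions, so neither excursion is internally balanced and a naive per-excursion mass count risks double-charging that jump. The bound $\ell_0\le n$ still goes through with a slightly more careful attribution: assign one unit of up-jump mass to the left endpoint of each positive excursion and one unit to the right endpoint of each negative excursion (equivalently, count upward crossings of the levels $\pm 1/2$); when these two endpoints coincide the straddling up-jump has magnitude at least $2$, so no unit is used twice, and the number of excursions is at most the total up-jump mass, which is at most $n$. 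This is precisely the bookkeeping the paper carries out explicitly when it selects distinct indices $l_j,r_j$ for coinciding endpoints, so the fix is short, but in the form you stated it step (i) is not yet a proof.
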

	Once we have obtained such  a decomposition of the support of the function $\Xi$,
	we can write a new version of (\ref{Xi_decomp}), that is
	$$
	\sum_{i=1}^n ( \gamma(t_i) - \gamma(s_i)) = \sum_{j=1}^n \varepsilon_j \int_{J_j} \gamma'(u_j) |\Xi(u_j)| du_j
	$$
	in which $\varepsilon_j \in \{\pm 1\}$ is the sign of $\Xi$ on $J_j$. 
	The final  step is to apply Proposition \ref{prop_4_det}: the right-hand side is the expression $Av$ for the vector $v = (\ep_1,\ep_2,\ldots,\ep_n)$. Since by Proposition \ref{lemma_decomp} (iii) we know that some $J_{j_0}$ has length at least $c_1R^{-1}$, we may conclude by Proposition \ref{prop_4_det} (c) that $|Av| \geq R^{-n}$, which verifies our desired inequality \eqref{eqn:gammaest}.
	It only remains to prove Proposition \ref{lemma_decomp}, which will occupy the remainder of this section.
	
	\subsection{Decomposition of the support of $\Xi$}
	
	\begin{proof}[Proof of Proposition \ref{lemma_decomp} (i)]
		 
		We note first of all that such a dissection of the support of $\Xi(t)$ into some finite number of intervals $\tilde{J}_1,\ldots , \tilde{J}_{\ell_0}$ exists,   because $\Xi(t)$ is a piecewise constant right continuous function. 
		We must only  show that when $\ell_0$ is chosen minimally, the decomposition can be made so that $\ell_0 \leq n$. 
		
		In this step, for convenience, we will sometimes write $s_{n+j}$ for $t_j$, if $1 \leq j \leq n$.
	Discontinuities of $\Xi(t)$ occur only at points in the set $\{ s_1,\ldots, s_{2n}\}$. By the minimality of $\ell_0$, $\Xi(t)$ must be discontinuous at the endpoints of every $\tilde{J}_j$, and thus the endpoints of each $\tilde{J}_j$ must be in the set $\{s_1, \dots, s_{2n}\}$. More precisely, if $\Xi(t)$ is positive on $\tilde{J}_j$, then the left endpoint of $\tilde{J}_j$ is in $\{s_1, \dots, s_n\}$, and the right endpoint of $\tilde{J}_j$ is in $\{t_1, \dots, t_n\} = \{s_{n+1}, \dots, s_{2n}\}$; we choose indices $l_j \in \{1, \dots, n\}$ and $r_j \in \{n+1, \dots, 2n\}$ such that the left endpoint and right endpoint of $\tilde{J}_j$ are $s_{l_j}$ and $s_{r_j}$ respectively. There may be more than one such choice of $l_j$ and $r_j$, and in that case we just make one choice and fix it once and for all. We call temporarily $L_+ \subset \{1, \dots, n\}$ the set of all $l_j$'s obtained from these intervals where $\Xi$ is positive, and $R_+ \subset \{n+1, \dots, 2n\}$ the set of all $r_j$'s obtained from these intervals where $\Xi$ is positive. To proceed further, if $\Xi(t)$ is negative on some $\tilde{J}_{j'}$, then the left endpoint of $\tilde{J}_{j'}$ is in $\{t_1, \dots, t_n\} = \{s_{n+1}, \dots, s_{2n}\}$, and the right endpoint of $\tilde{J}_{j'}$ is in $\{s_1, \dots, s_n\}$; we choose $l_{j'} \in \{n+1, \dots, 2n\} \setminus R_+$ and $r_{j'} \in \{1, \dots, n\} \setminus L_+$ such that the left and right endpoints of $\tilde{J}_{j'}$ are $s_{l_{j'}}$ and $s_{r_{j'}}$ respectively. This is possible, because if say the left endpoint of $\tilde{J}_{j'}$ is equal to $s_{r_j}$ for some $r_j \in R_+$, then the left endpoint of $\tilde{J}_{j'}$ is also the right endpoint of $\tilde{J}_j$ for some $\tilde{J}_j$ over which $\Xi$ is positive; in particular, there exists $p \in \{n+1, \dots, 2n\}$ with $p \ne r_j$ so that $s_p = s_{r_j}$, and we can simply pick $l_{j'} = p$.
	Similarly if the right endpoint of $\tilde{J}_{j'}$ is equal to $s_{l_j}$ for some $l_j \in L_+$, then the right endpoint of $\tilde{J}_{j'}$ is also the left endpoint of $\tilde{J}_j$ for some $\tilde{J}_j$ over which $\Xi$ is positive; in particular, there exists $q \in \{1, \dots, n\}$ with $q \ne l_j$ so that $s_q = s_{l_j}$, and we can simply pick $r_{j'} = q$. Altogether, one can check that $l_1, \dots, l_{\ell_0}, r_1, \dots, r_{\ell_0}$ is a list of distinct elements of $\{1, \dots, 2n\}$, so $2\ell_0 \leq 2n$, i.e. $\ell_0 \leq n$, proving the claim.
	\end{proof}

	\begin{proof}[Proof of Proposition \ref{lemma_decomp} (ii)]
	 
		The proof of Proposition \ref{lemma_decomp} (ii) relies on the following combinatorial fact, which we will prove at the end of this section.
		\begin{lem}\label{lem:comb}
			Let $(x_1,\dots,x_n)$ and $(y_1,\dots,y_n)$ be two lists of real numbers. Define $\mathbf{\chi}_{[x_i,y_i)}(t)$  to be $+1$ if $x_i\le t< y_i$ and $-1$ if $y_i\le t< x_i$ and $0$ otherwise. 
			Suppose that
			\begin{equation}\label{eqn:deftheta}
			\Theta(t) = \sum_{i=1}^n \mathbf{\chi}_{[x_i,y_i)}(t) = 0\quad\text{for all $ t\in\R$.}
			\end{equation}
			Then $(x_1,\dots,x_n)$ is a permutation of $(y_1,\dots,y_n)$.
		\end{lem}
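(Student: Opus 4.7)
The plan is to analyze the jump discontinuities of $\Theta$ and deduce that the multisets $\{x_1,\dots,x_n\}$ and $\{y_1,\dots,y_n\}$ must coincide. First I would record the following elementary observation about each summand: regardless of the sign of $y_i - x_i$, the function $\chi_{[x_i,y_i)}$ is piecewise constant with jump exactly $+1$ at $t=x_i$ and jump exactly $-1$ at $t=y_i$. This is a one-line case check using the right-continuous convention: if $x_i<y_i$, then $\chi_{[x_i,y_i)}$ is $+1$ on $[x_i,y_i)$ and $0$ elsewhere; if $y_i<x_i$, it is $-1$ on $[y_i,x_i)$ and $0$ elsewhere; if $x_i=y_i$, it is identically $0$, and the $+1$ and $-1$ jumps cancel at the common point.

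Next, I would use linearity of the jump operation to conclude that, for each $a\in\R$, the jump of $\Theta$ at $a$ equals
\[
 \#\{i : x_i = a\} \;-\; \#\{i : y_i = a\}.
\]
Since by hypothesis $\Theta\equiv 0$, the function $\Theta$ is (trivially) continuous, so all of these jumps vanish. Hence $\#\{i : x_i = a\} = \#\{i : y_i = a\}$ for every real $a$, which is exactly the statement that the multisets $\{x_i\}$ and $\{y_i\}$ agree. This yields the desired permutation.

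I do not expect any real obstacle; the only thing to check carefully is the degenerate case $x_i=y_i$, which contributes $0$ to the jump and $+1$ to both multiplicity counts, keeping the bookkeeping consistent. An equivalent, slightly slicker phrasing is to differentiate in the sense of distributions: the distributional derivative of $\chi_{[x_i,y_i)}$ is $\delta_{x_i}-\delta_{y_i}$, so the hypothesis $\Theta\equiv 0$ gives $\sum_{i=1}^n \delta_{x_i} = \sum_{i=1}^n \delta_{y_i}$ as measures on $\R$, and the desired conclusion follows by comparing the atoms of these two measures.
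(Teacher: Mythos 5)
Your proof is correct, and it takes a genuinely different and more direct route than the paper's. The paper's argument first discards indices with $x_i=y_i$, orders the distinct values $t_1<\cdots<t_m$, evaluates $\Theta$ at $t_{m-1}$ to conclude that $t_m$ occurs equally often among the $x_i$'s and the $y_i$'s, and then runs an induction on $m$ in which $t_m$ is merged into $t_{m-1}$ and the hypothesis $\widetilde{\Theta}\equiv 0$ is re-verified for the collapsed lists. You replace this reduction-plus-induction by a single local observation: with the right-continuous convention, each summand $\chi_{[x_i,y_i)}$ has jump $\Theta$-contribution $+1$ at $x_i$ and $-1$ at $y_i$ (the degenerate case $x_i=y_i$ contributing $0$ to the jump and cancelling in the counts), so the jump $\Theta(a)-\lim_{t\to a^-}\Theta(t)$ equals $\#\{i : x_i=a\}-\#\{i : y_i=a\}$ for every $a$; since $\Theta\equiv 0$, all jumps vanish and the multisets coincide. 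The distributional phrasing $\partial_t\chi_{[x_i,y_i)}=\delta_{x_i}-\delta_{y_i}$, hence $\sum_i\delta_{x_i}=\sum_i\delta_{y_i}$, is an equally valid packaging of the same computation. What your route buys is brevity: no preliminary removal of matching pairs and no inductive collapse, since the jump at a point isolates the multiplicity difference there directly (left limits exist because $\Theta$ is a finite sum of piecewise-constant right-continuous functions, which is worth one explicit sentence). What the paper's route buys is that it only ever evaluates $\Theta$ at finitely many points, with no limits or distributions, and it introduces the multiplicity counts $\xi_k,\eta_k$ in a form that matches the surrounding elementary arguments; but both proofs are complete and essentially equally elementary.
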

		
		We assume this lemma for the moment, and verify part (ii) of Proposition \ref{lemma_decomp}.
		For every $i$, let $x_i=c(I_i)$ and $y_i=c(I_i')$ denote the centers of the intervals, and define the function $\Theta$ as in \eqref{eqn:deftheta}.	 Note that although for some values of $t$, $\Theta(t)$ may differ from $\Xi(t)$ as defined in (\ref{Xi_dfn}), we do have $\Theta(t) = \Xi(t)$ for $t\in  (\bigcup_{i=1}^n I_i\cup I_i')^c$; thus while the extra symmetry of $\Theta(t)$ aids us in establishing its properties, we may deduce useful consequences for $\Xi$ as well.
		
		 Since $(I_1,\dots,I_n)$ is not a permutation of $(I_1',\dots,I_n')$ we conclude that $(x_1,\dots,x_n)$ is not a permutation of $(y_1,\dots,y_n)$. By Lemma \ref{lem:comb} the function $\Theta$ does not vanish identically, and moreover we can pick a point $t_0\in (\bigcup_{i=1}^n I_i\cup I_i')^c$ such that $\Theta(t_0)\not=0$. (To see this, recall that $\Theta$ can have discontinuities only at $x_1,\dots,x_n,y_1,\dots,y_n$ and that distinct intervals in $\mathcal{I}$ are separated by at least $c_0R^{-1}$.)
	Furthermore, $\Theta$ is constant on each component of $(\cup_{i=1}^n I_i \cup I'_i)^c$, and since 
	  distinct intervals in $\mathcal{I}$ are separated by at least $c_0 R^{-1}$, the component, say $\widetilde{J}_{j_0}$, in which $t_0$ is contained must be at least of length $c_0 R^{-1}$. From this we deduce that $\Xi(t_0) \neq 0$, and $\Xi$ is also constant on $\widetilde{J}_{j_0}$, which suffices to prove (ii) of Proposition \ref{lemma_decomp}. 
	\end{proof}
	
	\begin{proof}[Proof of Proposition \ref{lemma_decomp} (iii)]
		If $\ell_0 = n$, then (iii) already has been verified. Otherwise, if $\ell_0 < n$, we choose some $j$ and split $\tilde{J}_j$ up into $n-\ell_0+1$ essentially disjoint closed intervals of positive length, obtaining exactly $n$ essentially disjoint closed subintervals $J_1, \dots, J_n$ of $[0,1]$, with  the properties specified in (iii). 
	\end{proof}

	\begin{proof}[Proof of Lemma \ref{lem:comb}]
		We may assume without loss of generality that for each $1 \leq i \leq n$, $x_i\not=y_i$, since removing such pairs from the lists does not change the value of $\Theta$ at any point, and the tuple $(x_1,\ldots, x_n)$ is a permutation of $(y_1,\ldots,y_n)$ if and only if the remaining values are a permutation, after the matching $x_i=y_i$ have been removed. 
		
		Let us write $ \{t_1<\cdots<t_m \}$ for the ordered set of distinct values taken on by any of $x_1,\ldots, x_n$ or $y_1,\ldots, y_n$.
		Denote by $\xi_k$ the number of times that $t_k$ appears in the list of $x_i$'s and by $\eta_k$ the number of times that $t_k$ appears in the list of $y_i$'s. Then it suffices to show that $\xi_k=\eta_k$ for all $1\le k\le m$. 
		We proceed by induction on $m$; we may assume that $m\ge 2$ (since  $m=1$ would require all $x_i$ and $y_i$ to be equal, a case we have ruled out).

		Given $m \geq 2$, we observe that
		\[ \Theta(t_{m-1}) = \# \{ i\,:\,x_i\le t_{m-1}<y_i\} - \#\{i\,:\,y_i\le t_{m-1}<x_i \} = \eta_m - \xi_m \]
		because $x_i\le t_{m-1}<y_i$ if and only if $y_i=t_m$ (since $x_i\not=y_i$) and $y_i\le t_{m-1}< x_i$ if and only if $x_i=t_m$. Assuming $\Theta$ is identically zero, this shows $\xi_m=\eta_m$. 
		Of course, $\xi_1 + \cdots + \xi_m = \eta_1 + \cdots + \eta_m=n$. In the case $m=2$, these two relations suffice to show that $\xi_1 = \eta_1$ and $\xi_2 = \eta_2$.  Now we assume the induction hypothesis that the claim is true if the set of distinct values has at most $m-1$ elements. Then supposing the set of distinct values is $ \{t_1<\cdots<t_m \}$, define new lists $\widetilde{x}$, $\widetilde{y}$ as follows:
		\[ \widetilde{x}_i = \left\{\begin{array}{ll}x_i & \text{if}\;x_i< t_m\\
		t_{m-1} & \text{if}\;x_i=t_m\end{array} \right.,\quad
		\widetilde{y}_i = \left\{\begin{array}{ll}y_i & \text{if}\;y_i< t_m\\
		t_{m-1} & \text{if}\;y_i=t_m\end{array} \right.. \]
		Then the distinct values taken on by elements in $(\widetilde{x}_1, \dots,\widetilde{x}_n)$ or  $(\widetilde{y}_1, \dots, \widetilde{y}_n )$ give precisely the ordered set  $\{t_1<\dots<t_{m-1}\}$.
		We also claim that $\widetilde{\Theta}(t) = \sum_{i=1}^n \chi_{[\widetilde{x}_i, \widetilde{y}_i)}(t) = 0$ for every $t\in\R$. Indeed, $\widetilde{\Theta}(t)=0$ if $t<t_1$ or $t\ge t_{m-1}$. On the other hand, if $t_1\le t< t_{m-1}$, then
		\[ \chi_{[\widetilde{x}_i, \widetilde{y}_i)}(t) = \chi_{[x_i,y_i)}(t). \]
		Therefore, $\widetilde{\Theta}(t) = \Theta(t) = 0$. Applying the inductive hypothesis, we obtain $\xi_k=\eta_k$ for all $k=1,\dots,m-2$ and also $\xi_{m-1}+\xi_m = \eta_{m-1} + \eta_m$, which implies $\xi_{m-1}=\eta_{m-1}$ because we already showed $\xi_m=\eta_m$.
	\end{proof}

	\section{Proof of Theorem \ref{thm:main}: The square function estimate}

We first sparsify our collection of intervals. Given a non-degenerate curve $\ga$, we fix a sufficiently large constant $c_0 =c_0(\ga,n)\ge 10$  and a constant $\delta_0  = \delta_0(\ga,n)$ as in Proposition  \ref{prop_ga_diag}. From now on we let $\mathcal{I}$ denote a collection of intervals, chosen from our initial collection of intervals $\{ R^{-1}[\ell,\ell+1]\,:\,0\le\ell<R\}$, such that
	\[ \mathrm{dist}(I,I')\ge c_0 R^{-1} \quad\text{for}\;I\not=I'\in\mathcal{I}, \quad\text{and}\quad \mathrm{diam}\big(\bigcup_{I\in\mathcal{I}} I\big)\le \delta_0. \]
We can cover $[0,1]$ by taking at most $(c_0+1)\delta_0^{-1} $ such collections $\mathcal{I}$. We will prove for each $1 \leq m \leq n$ and for each such collection that
\[	\big\|\sum_{I\in\mathcal{I}} E_I f\big\|_{L^{2m}(w_B)} \lesssim \big\|\big(\sum_{I\in\mathcal{I}} |E_I f|^2\big)^{1/2}\big\|_{L^{2m}(w_B)};
\]	
 summing over such collections contributes only to the constant $C$ on the right-hand side of (\ref{eqn:main}).
	
	By a  standard reduction regarding weighted norms \cite[Lemma 4.1]{BD17}, it now suffices to show that
	\begin{equation}\label{eqn:penult}
	\big\|\sum_{I\in\mathcal{I}} E_I f\big\|_{L^{2m}(\mathbf1_B)} \lesssim \big\|\big(\sum_{I\in\mathcal{I}} |E_I f|^2\big)^{1/2}\big\|_{L^{2m}(w_B)},
	\end{equation}
	where $\mathbf{1}_B$ denotes the characteristic function of $B$. Without loss of generality we may assume that the ball $B$ is centered at the origin (see e.g. \cite[p. 58]{Pie19}).
	Let $\varphi$ be a non-negative Schwartz function on $\mathbb{R}^n$ so that $\varphi \ge 1$ on the unit ball centered at $0$ and $\widehat{\varphi}$ is supported on the unit ball centered at $0$. (To construct such a function, let $\psi$ be such that $\psi \in C^\infty_c(B(0,1/4))$ and $\int \psi (\xi)d\xi  > 1$. Then define $\varphi$ by $\widehat{\varphi} =\psi* \overline{\psi(-\cdot)}$ so that $\varphi = |\widehat{\psi}|^2$, and in particular $\varphi(0) = |\widehat{\psi}(0)|^2 >1$; this continues to hold in some small neighborhood of the origin, and by redefining $\varphi$ appropriately after a fixed rescaling, we can ensure $\varphi(x) \geq 1$ on the unit ball.) Denote $\varphi_R(x)=\varphi(R^{-n} x)$. We will prove that
	\begin{equation}\label{eqn:goal}
	\big\|\sum_{I\in\mathcal{I}} E_I f\big\|^{2m}_{L^{2m}(\varphi_R)} =\big\|\big(\sum_{I\in\mathcal{I}} |E_I f|^2\big)^{1/2}\big\|^{2m}_{L^{2m}(\varphi_R)},
	\end{equation}
	which suffices to verify \eqref{eqn:penult}.
	
	The left-hand side of \eqref{eqn:goal}  is equal to
	\begin{equation}
	\sum_{I_1, \dots, I_m} \sum_{I_1', \dots, I_m'} \int_{\mathbb{R}^n} \varphi(R^{-n} x) E_{I_1} f(x) \cdots E_{I_m} f(x) \overline{E_{I_1'} f(x) \cdots E_{I_m'} f(x)} dx. \label{eq:expansion}
	\end{equation}
	For fixed collections of intervals $I_1,\dots,I_m,I'_1,\dots,I'_m$, expanding the extension operators shows that the contribution to the integral is equal to
	\begin{align}\label{blur_demo}
	\int_{I_1 \times \dots \times I_m} \int_{I_1' \times \dots \times I_m'} R^{n^2}\widehat{\varphi}\big(R^n \sum_{i=1}^m( \gamma (t_i)-\gamma(s_i)) \big) f(t_1) \dots f(t_m) \overline{f(s_1) \dots f(s_m)} dt_1 \cdots dt_m ds_1 \cdots ds_m.
	\end{align}
	Suppose that $(I_1,\ldots, I_m)$ is not a permutation of $(I_1',\ldots, I_m')$.
	In order to enlarge these to two $n$-tuples of intervals, choose an arbitrary $J\in \mathcal{I}$ and set $I_i=I'_i=J$ for all $m<i\le n$. 
	Then we apply Proposition \ref{prop_ga_diag} to conclude that
	 \[ \big|\sum_{i=1}^n (\gamma(t_i)-\gamma(s_i))\big|\ge R^{-n} \]
	holds for all $(t_1,\dots,t_n,s_1,\dots,s_n)\in I_1\times \cdots\times I_n\times I_1'\times\cdots\times I_n'$.
	In particular, upon setting $s_i =c(I_i), t_i = c(I_i')$ for the auxiliary intervals with $m< i \le n$, we deduce that 
		\[ \big|\sum_{i=1}^m (\gamma(t_i)-\gamma(s_i))\big| = \big|\sum_{i=1}^n (\gamma(t_i)-\gamma(s_i))\big|\ge R^{-n} \]
		holds for all $(t_1,\dots,t_m,s_1,\dots,s_m)\in I_1\times \cdots\times I_m\times I_1'\times\cdots\times I_m'$.
	This implies
	\begin{equation}\label{phi_vanish}
	\widehat{\varphi}\big(R^n \sum_{i=1}^m( \gamma (t_i)-\gamma(s_i)) \big)=0.
	\end{equation}
	Thus the only terms remaining in (\ref{eq:expansion}) are precisely  
	\[
	\int_{\mathbb{R}^n} \varphi(R^{-n} x) \big(\sum_{I\in\mathcal{I}} |E_{I} f(x)|^2\big)^m dx = \big\|\big(\sum_{I\in\mathcal{I}} |E_I f|^2\big)^{1/2}\big\|^{2m}_{L^{2m}(\varphi_R)},
	\]
	which completes the proof of \eqref{eqn:goal}, and hence of the theorem.

\end{document}